\tikzset{
			inner sep=1pt,semithick,
			vertex/.style={circle,draw,fill,minimum size= 1pt},
			vertexb/.style={rectangle,draw,fill, minimum size = 3pt},
			vertexd/.style={rectangle,draw,fill=white, minimum size = 3pt},			
			vertexc/.style={circle,draw,fill=white,minimum size= 1pt},			
			thickedge/.style={line width=0.73pt},
			thickeredge/.style={line width=1.5pt},					
			font=\tiny
}
\newcommand{\T}{\mathcal{T}}
\newcommand{\K}{\mathcal{K}}
\newcommand{\D}{\mathcal{D}}
\newcommand{\XX}{\mathbb{X}}
\newcommand{\LL}{\mathbb{L}}
\newcommand{\II}{\mathbb{I}}
\newif\ifdetails
\newcommand{\DETAIL}[1]%
{\ifdetails\par\fbox{\begin{minipage}{0.9\linewidth}\textit{Detail:}
      #1\end{minipage}}\par\fi}
\newcommand{\TODO}[1]%
{\ifdetails\par\fbox{\begin{minipage}{0.9\linewidth}\textbf{TODO:}
      #1\end{minipage}}\par\fi}
\newtheorem{lemma}{Lemma}
\newtheorem{theorem}[lemma]{Theorem}
\newtheorem{definition}{Definition}
\newcommand{\Crt}{\operatorname{crt}}
\newcommand{\Cr}{\operatorname{cr}}
\newcommand{\lcr}{\overline{\operatorname{cr}}}
\newcommand{\old}[1]{{}}
\DeclareRobustCommand{\cev}[1]{%
  {\mathpalette\do@cev{#1}}%
}
\newcommand{\do@cev}[2]{%
  \vbox{\offinterlineskip
    \sbox\z@{$\m@th#1 x$}%
    \ialign{##\cr
      \hidewidth\reflectbox{$\m@th#1\vec{}\mkern4mu$}\hidewidth\cr
      \noalign{\kern-\ht\z@}
      $\m@th#1#2$\cr
    }%
  }%
}
\title[Tanglegrams with a unique $1$-crossing critical subtanglegram]{Tanglegrams with a unique $1$-crossing-critical subtanglegram have tangle crossing number $1$}
\author[\'E. Czabarka, A. Helm, L. A. Sz\'ekely]{\'Eva Czabarka, Alec Helm, L\'aszl\'o A. Sz\'ekely}
\address{\'Eva Czabarka\\ Department of Mathematics \\ University of South Carolina \\ Columbia, SC 29208 \\ USA}
\email{czabarka@math.sc.edu}
\address{Alec Helm\\ Department of Mathematics \\ University of South Carolina \\ Columbia, SC 29208 \\ USA}
\email{ah191@math.sc.edu}
\address{L\'aszl\'o A. Sz\'ekely\\ Department of Mathematics \\ University of South Carolina \\ Columbia, SC 29208 \\ USA}
\email{szekely@math.sc.edu}
\subjclass[2010]{Primary 05C10; secondary 05C05}
\keywords{tanglegram, crossing number, crossing-critical graph, crossing-critical tanglegram}
\begin{document}
\maketitle

\begin{abstract}{A tanglegram of size $n$ is a graph formed from two rooted binary trees with $n$ leaves each and a perfect matching between their leaf sets. 
Tanglegrams are used to model co-evolution in various settings.
A tanglegram layout is a straight line drawing where the two trees are drawn as plane trees with their leaf-sets on two parallel lines, and only the edges
of the matching may cross. 
The tangle crossing number of a tanglegram is the minimum crossing number among its layouts. 
It is known that tanglegrams have crossing number at least one precisely when they contain one of two size $4$ subtanglegrams, which we refer to as 
cross-inducing subtanglegrams. 
We show here that a tanglegram with exactly one cross inducing subtanglegram must have tangle crossing number exactly one,
and ask the question whether the tangle-crossing number of tanglegrams with exactly $k$ cross-inducing subtanglegrams is bounded  for every $k$. }
\end{abstract}

\section{Introduction}

The {\it crossing number} $\Cr({\mathcal D})$ of a graph drawing $\mathcal D$ 
is the sum of the number of crossings between unordered edge pairs, and the {\it crossing number} $\Cr(G)$ of a graph $G$
is the minimum crossing number over all of its drawings. 
A {\it subdivision} of a graph is obtained by replacing some of its edges with paths; the subdividing vertices are the internal vertices of these paths. It is obvious and well-known that the crossing number of a graph is the same as the crossing number of any of its subdivisions, and the  celebrated Kuratowski Theorem asserts that a graph has a positive crossing number if and only if it contains a subgraph that is isomorphic to a subdivision of $K_5$ (the complete graph on $5$ vertices) or of $K_{3,3}$ (the complete bipartite graph with $3$ vertices in each partition class). We will refer to a subgraph of $G$ that is isomorphic to a $K_{3,3}$ or a $K_5$ as a {\it cross-inducing subgraph}. The {\it rectilinear crossing number} $\lcr(G)$ of a graph $\mathcal G$ 
is the minimum crossing number over all of its straight line drawings, where the edges are drawn in as straight lines. F\'ary's theorem \cite{fary} states that
planar graphs have planar straight line drawings, and consequently the cross-inducing subgraphs with respect to the regular and rectilinear crossing number are the same. 

A graph $G$ is {\it $k$-crossing-critical} if $G$ has no degree $2$ vertices, $\Cr(G)\geq k$, but any proper subgraph has crossing number below $k$.
A graph has crossing number at least $k$ precisely when it contains a subdivision of a $k$-crossing critical graph.
The Kuratowski Theorem gives that
$1$-crossing-critical graphs are exactly $K_5$ and $K_{3,3}$. For $k\ge 2$ such finite characerizations do no exist:
Kochol \cite{kochol} gave an infinite family of $3$-connected $k$-crossing-critical graphs for any $k\ge 2$.
The 
$2$-crossing-critical graphs, up to finitely many exceptions, were characterized by Bokal, Oporowski, Richter and Salazar \cite{2cc}. 

A {\it tanglegram} $\T$ is a graph that consists of a left rooted binary tree $L_{\T}$ and a right rooted binary tree $R_{\T}$ with the same number of leaves, and a perfect matching $\sigma_{\T}$ between their leaves 
(for the definitions regarding trees, see Section~\ref{sec:treedefs},  regarding tanglegrams, see Section~\ref{sec:tangdefs}). The {\it size} of the tanglegram is 
$|\sigma_{\T}|$.
A
 {\it tanglegram layout} is a straight line drawing where the rooted left- and right-trees are drawn as plane trees, their leaves are drawn on two parallel lines, and only the matching edges may cross, as in Figure~\ref{fig:1cc}. The {\it crossing number} of a layout is the number of unordered 
pairs of matching edges which cross in the drawing.
The {\it tangle crossing number} $\Crt(\T)$ of a tanglegram $\T$ is the minimum crossing number of its layouts. 
Tanglegrams are used in bioinformatics to model co-evolution \cite{Burt,HafnerNadler}.
The tangle crossing number correlates with 
parameters of interests (e.g. number of times parasite switched host, number of horizontal gene transfers) in 
various models of co-evolution and consequently is much studied. Computing the tangle crossing number is NP-hard \cite{buchin, fernau}, but is Fixed Parameter Tractable \cite{buchin, bocker}. It does not allow constant factor approximation under the Unique Game Conjecture \cite{buchin}. Several heuristics for the problem are compared in
\cite{nburg}.
As the reality of biology is infinitely more complicated than this model, \cite{baumann} extended the study of the problem for non-binary trees, \cite{scornavacca}  for phylogenetic networks,
\cite{bansal} for models where correspondence among hosts and parasites is no longer one-to-one.

A tanglegram $\T$ is {\it $m$-crossing-critical} if $\Crt(\T)\geq m$ but any proper induced subtanglegram has crossing number below $m$.
Czabarka, Sz\'ekely and Wagner \cite{tanglekurat} have proved a tanglegram analogue of the Kuratowski Theorem,  the  Tanglegram Kuratowski Theorem: 
\begin{theorem}\label{th:tkurat}
The  1-crossing-critical tanglegrams are
$\K_1$ and $\K_2$.  Hence 
for a tanglegram $\T$,
$\Crt(\T)\ge 1$ if and only if $\T$ contains $\K_1$ or $\K_2$ (see {\rm Figure~\ref{fig:1cc}}) as an induced subtanglegram. 
 \begin{figure}[http]
\centering 
\begin{tikzpicture}
\node[vertex] (lroot) at (-.5,0) {};
\node[vertex] (la1) at (.5,.5) {};
\node[vertex] (la2) at (.5,-.5) {};
\node[vertexb] (ll1) at (1, .75) {};	
\node[vertexb] (ll2) at (1, 0.25) {};		
\node[vertexb] (ll3) at (1, -0.25) {};		
\node[vertexb] (ll4) at (1, -.75) {};		
\draw[thickedge] (la1)--(lroot)--(la2);
\draw[thickedge] (ll1)--(la1)--(ll2);
\draw[thickedge] (ll3)--(la2)--(ll4);
\draw[thickedge] (-1.25,0)--(lroot);
\node[vertex] (rroot) at (3.5,0) {};
\node[vertex] (ra1) at (2.5,.5) {};
\node[vertex] (ra2) at (2.5,-.5) {};
\node[vertexb] (rl1) at (2, .75) {};	
\node[vertexb] (rl2) at (2, 0.25) {};		
\node[vertexb] (rl3) at (2, -0.25) {};		
\node[vertexb] (rl4) at (2, -.75) {};		
\draw[thickedge] (ra1)--(rroot)--(ra2);
\draw[thickedge] (rl1)--(ra1)--(rl2);
\draw[thickedge] (rl3)--(ra2)--(rl4);
\draw[thickedge] (4.25,0)--(rroot);
\draw[thickedge] (ll1)--(rl1);
\draw[thickedge] (ll2)--(rl3);
\draw[thickedge] (ll3)--(rl2);
\draw[thickedge] (ll4)--(rl4);
\node at (1.5,-1.25) {The tanglegram $\K_1$};
\end{tikzpicture}
\quad
\begin{tikzpicture}
\node[vertex] (lroot) at (-.5,0) {};
\node[vertex] (la1) at (0,-.25) {};
\node[vertex] (la2) at (.5,-.5) {};
\node[vertexb] (ll1) at (1, .75) {};	
\node[vertexb] (ll2) at (1, 0.25) {};		
\node[vertexb] (ll3) at (1, -0.25) {};		
\node[vertexb] (ll4) at (1, -.75) {};		
\draw[thickedge] (ll1)--(lroot)--(la1);
\draw[thickedge] (ll2)--(la1)--(la2);
\draw[thickedge] (ll3)--(la2)--(ll4);
\draw[thickedge] (-1.25,0)--(lroot);
\node[vertex] (rroot) at (3.5,0) {};
\node[vertex] (ra1) at (3,.25) {};
\node[vertex] (ra2) at (2.5,.5) {};
\node[vertexb] (rl1) at (2, -.75) {};	
\node[vertexb] (rl2) at (2, -0.25) {};		
\node[vertexb] (rl3) at (2, 0.25) {};		
\node[vertexb] (rl4) at (2, .75) {};		
\draw[thickedge] (rl1)--(rroot)--(ra1);
\draw[thickedge] (rl2)--(ra1)--(ra2);
\draw[thickedge] (rl3)--(ra2)--(rl4);
\draw[thickedge] (4.25,0)--(rroot);
\draw[thickedge] (ll1)--(rl4);
\draw[thickedge] (ll2)--(rl2);
\draw[thickedge] (ll3)--(rl3);
\draw[thickedge] (ll4)--(rl1);
\node at (1.5,-1.25) {The tanglegram $\K_2$};
\end{tikzpicture}
\caption{The 1-crossing-critical tanglegrams $\K_1$ and $\K_2$.} 
\label{fig:1cc}
\end{figure}
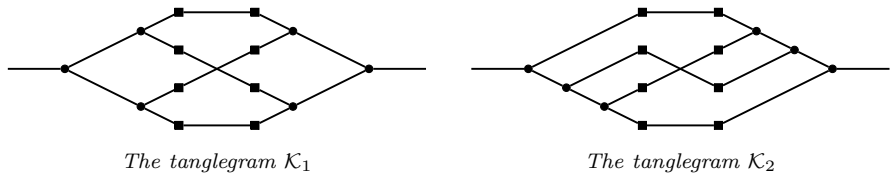
\end{theorem}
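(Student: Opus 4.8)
The plan is to establish the displayed equivalence ``$\Crt(\T)\ge 1$ if and only if $\T$ has an induced $\K_1$ or $\K_2$''; the characterization of the $1$-crossing-critical tanglegrams then follows formally, since $\K_1$ and $\K_2$ are non-isomorphic tanglegrams of size $4$ (so neither is a proper induced subtanglegram of the other), every proper induced subtanglegram of $\K_1$ or of $\K_2$ has size at most $3$, and --- as used throughout --- $\Crt$ is monotone under induced subtanglegrams while every tanglegram of size at most $3$ is planar. The main tool is a reformulation of planarity: fixing an identification of the $n$ matched pairs with $[n]$, a planar layout of $\T=(L,R,\sigma)$ is the same thing as a linear order of $[n]$ in which every \emph{cluster} --- the set of matched pairs lying below some fixed internal vertex, of $L$ or of $R$ --- is an interval. (The leaf orders realized by plane drawings of a rooted binary tree are exactly those making all its clusters intervals, and a matching between two parallel lines is crossing-free precisely when it is order-preserving.)

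For the easy implication, by monotonicity it suffices to show $\Crt(\K_1)\ge 1$ and $\Crt(\K_2)\ge 1$, the layouts in Figure~\ref{fig:1cc} supplying the matching upper bounds. Via the reformulation this is a finite check: for $\K_1$ the nontrivial clusters are, after a suitable relabelling, $\{1,2\},\{3,4\},\{1,3\},\{2,4\}$, and no linear order of $[4]$ makes all four into intervals; for $\K_2$ they are $\{3,4\},\{2,3,4\},\{1,3\},\{1,2,3\}$, with the same conclusion.

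For the substantive implication I would argue the contrapositive by induction on the size $n$, the cases $n\le 3$ being immediate: assuming $\T$ has no induced $\K_1$ or $\K_2$, I claim $\T$ is planar. Choose a cherry $\{a,b\}$ of $L$. If $\{\sigma(a),\sigma(b)\}$ is a cherry of $R$ as well, contract $\{a,b\}$ to a single matched pair; the resulting tanglegram of size $n-1$ again has no induced $\K_1$ or $\K_2$ (any such would lift back to $\T$ by reading $a$ for the contracted pair, using that twin leaves in a rooted binary tree are insensitive to restriction), so it is planar by induction, and re-expanding the common cherry in one of its planar layouts keeps the layout planar. Otherwise, $\T-b$ has no induced $\K_1$ or $\K_2$, hence by induction has a planar layout $\Lambda$, and I try to reinsert $b$. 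The $L$-clusters containing $b$ force $b$ to be placed immediately beside $a$ (and one checks this automatically leaves every $L$-cluster an interval), while the $R$-clusters containing $b$ force $b$ to be placed immediately beside --- but not inside --- the block of $\Lambda$ formed by $S'$, where $S'$ denotes the cluster of the sibling subtree of $\sigma(b)$ in $R$ (and, again, this leaves every $R$-cluster an interval). So $b$ can be reinserted, yielding a planar layout of $\T$, exactly when $\Lambda$ places $a$ at, or immediately next to, an end of the block $S'$; when $\{\sigma(a),\sigma(b)\}$ is a common cherry this is automatic, since then $S'=\{a\}$.

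The whole problem thus reduces to the statement: \emph{if no planar layout of $\T-b$ places $a$ at or next to an end of the block $S'$, then $\T$ already contains an induced $\K_1$ or $\K_2$}. This is where I expect the real work, and the main obstacle, to lie. The set of planar layouts of $\T-b$ is governed by the simultaneous interval structure of its two cluster hierarchies, and I would enumerate the ways $a$ can be forced away from every end of $S'$ across all such layouts, extracting in each case four matched pairs whose induced subtanglegram is $\K_1$ or $\K_2$, and allowing myself to instead delete $a$, or to start from a cherry of $R$, so as to localize the obstruction. (A more mechanical alternative is to form the PQ-tree intersection of the plane-embedding PQ-trees of $L$ and of $R$ and read the obstruction off the first failed reduction; the bookkeeping looks comparable.) In either route the delicate point is not the mere existence of a bounded obstruction --- that part is soft --- but forcing the minimal one to have size exactly $4$, and that is precisely where one must exploit that the relevant set system is not arbitrary but the union of two \emph{binary laminar} hierarchies.
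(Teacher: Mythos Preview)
The paper does not prove Theorem~\ref{th:tkurat}; it is quoted from \cite{tanglekurat} and used as a black box. So there is no ``paper's own proof'' to compare against here. What the present paper does record about the argument in \cite{tanglekurat} is that it proceeds via the associated graph $\T^{*}$ (obtained by joining the two roots with an edge): one shows $\Crt(\T)\ge 1$ if and only if $\Cr(\T^{*})\ge 1$, and then invokes the graph Kuratowski theorem, noting that $\K_1^{*}$ and $\K_2^{*}$ are themselves subdivided $K_{3,3}$'s. Your approach is genuinely different: a direct combinatorial induction on the size via cherry deletion/reinsertion, with planarity reformulated as the existence of a linear order making all clusters of both trees into intervals. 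That reformulation is exactly the content of Lemma~\ref{lm:consistent} together with~(\ref{eq:crosslayout}), so the set-up is sound and in the spirit of the present paper's toolkit.

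That said, your write-up is explicitly a sketch, and the place you flag as ``the real work'' is indeed a genuine gap. The reduction to ``if no planar layout of $\T-b$ places $a$ at or next to an end of the block $S'$, then $\T$ contains an induced $\K_1$ or $\K_2$'' is correct, but nothing you have written forces the minimal obstruction down to size $4$. Saying you would ``enumerate the ways $a$ can be forced away'' or ``read the obstruction off a PQ-tree reduction'' is a plan, not an argument: the standard PQ-tree machinery gives obstructions to the consecutive-ones property for a \emph{single} set system, and here you must handle the interaction of two binary laminar families simultaneously, across \emph{all} planar layouts of $\T-b$. You also need to be a bit more careful in the reinsertion step: when $a\in S'$ (i.e.\ $\sigma(a)$ lies below the sibling of $\sigma(b)$), the correct requirement is that $a$ be an \emph{endpoint} of the interval $S'$, not merely adjacent to it, and when $a\notin S'$ you must check that placing $b$ between $a$ and $S'$ respects the $L$-clusters above $\{a,b\}$ as well. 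None of this is fatal to the strategy, but as it stands the proposal does not constitute a proof; if you want a complete argument along these lines you will have to actually carry out the case analysis you allude to, or else fall back on the $\T^{*}$ route from \cite{tanglekurat}.
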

Given a tanglegram $\T$, we call an $X\subseteq\sigma_{\T}$ {\it cross-responsible} if $X$ induces a $\K_1$ or $\K_2$ subtanglegram in $\T$.
The Tanglegram Kuratowski Theorem can be reformulated as follows: $\Crt(\T)\ge 1$ if and only if $\T$ has at least one cross-responsible set.

\smallskip
The goal of this paper is to prove the following result:
\begin{theorem}\label{th:main} If a tanglegram $\T$ has a unique cross-responsible set, then $\Crt(\T)=1$. Equivalently, every $2$-crossing-critical
tanglegram contains at least $2$ cross-responsible sets.
\end{theorem}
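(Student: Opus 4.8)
The plan is to induct on the number $n$ of leaves of $\T$. For $n\le 4$ the statement is immediate: if $\T$ has a (necessarily unique) cross-responsible set then $n=4$ and $\sigma_{\T}$ itself is that set, so $\T\in\{\K_1,\K_2\}$ and $\Crt(\T)=1$. Assume now $n\ge 5$ and that the result holds for tanglegrams with fewer leaves, and let $X$ denote the unique cross-responsible set of $\T$, which uses only four of the $n$ leaves on each side. There are two cases. If $\T$ has a \emph{matched cherry} --- two sibling leaves of $L_{\T}$ whose partners are siblings in $R_{\T}$ --- then contracting it produces a tanglegram $\widehat{\T}$ with $n-1$ leaves and $\Crt(\widehat{\T})=\Crt(\T)$, since the two parallel matching edges may be drawn side by side; because neither $\K_1$ nor $\K_2$ has a matched cherry, a short argument shows that $\widehat{\T}$ again has a unique cross-responsible set, so the induction hypothesis gives $\Crt(\widehat{\T})=1$ and hence $\Crt(\T)=1$. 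From now on assume $\T$ has no matched cherry.

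Pick a leaf $v$ of $L_{\T}$ whose matching edge is not in $X$ (possible since $n\ge 5$), and let $\T'=\T-v$ be obtained by deleting $v$ and $\sigma_{\T}(v)$ and suppressing the resulting degree-$2$ vertices. For any four matching edges $S$ of $\T'$, the subtanglegram they induce in $\T'$ coincides with the one they induce in $\T$, because deleting a leaf not spanned by $S$ and suppressing a degree-$2$ vertex affects neither the relevant Steiner trees nor the restricted matching; hence $S$ is cross-responsible in $\T'$ if and only if it is in $\T$, so $X$ remains the unique cross-responsible set of $\T'$. By the induction hypothesis, $\T'$ has a layout $\D'$ with exactly one crossing, so the matching of $\D'$ is order-preserving apart from a single swap of two consecutive leaves.

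It remains to reinsert $v$ and $\sigma_{\T}(v)$ into $\D'$ without a new crossing; this is the crux. In $L_{\T}$ the leaf $v$ has a sibling $w$, and recovering $L_{\T}$ from $L_{\T'}$ forces $v$ to be placed immediately above, or immediately below, the block $B_L$ of leaves of $w$'s subtree (which is contiguous in the leaf order of $\D'$); similarly $\sigma_{\T}(v)$ must go immediately above or below the analogous block $B_R$ on the right. Let $A_L$, $C_L$ be the left leaves of $\D'$ above, below $B_L$, and define $A_R$, $C_R$ likewise. A direct computation shows that one of the four placements is crossing-free precisely when $\sigma_{\T}(A_L)$ or $\sigma_{\T}(A_L\cup B_L)$ equals $A_R$ or $A_R\cup B_R$ --- equivalently, when $\sigma_{\T}$ carries one of the two cuts of the left order bordering $B_L$ onto one of the two cuts bordering $B_R$; and since the matching of $\D'$ is a single consecutive swap, each of these four conditions reduces to a simple numerical relation among the endpoints of $B_L$, the endpoints of $B_R$, and the location of that swap.

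The remaining task, which I expect to be the main obstacle, is to show that one can always arrange for one of these four conditions to hold, by choosing $v$ suitably and, if necessary, replacing $\D'$ by another one-crossing layout of $\T'$ obtained by flipping subtrees disjoint from its crossing. The idea is that if reinsertion failed for every admissible choice, then --- using the explicit numerical form of the four failing conditions, the shape of the Steiner tree of $X$ in $L_{\T}$ and $R_{\T}$ (the two cases $\T[X]\cong\K_1$ and $\T[X]\cong\K_2$), and the absence of a matched cherry --- one could exhibit a $\K_1$ or a $\K_2$ among the edges of $\T$ that uses the matching edge at $v$, a cross-responsible set different from $X$, contradicting uniqueness. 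The bookkeeping here amounts to tracking where the pendant subtrees of $\T$ attach along the skeleton of $X$; these attachment sites are severely constrained, since replacing any of the four leaves of $X$ by a leaf from a pendant subtree must not create a cross-responsible set. An induction-free variant carries out this positional analysis globally from the start: for each leaf outside $X$ record the region of the $X$-skeleton containing it in $L_{\T}$ and the region containing its partner in $R_{\T}$, use uniqueness to discard the region pairs that would create a second cross-responsible set, and read off plane embeddings of $L_{\T}$ and $R_{\T}$ whose only inversion is the one forced by $X$.
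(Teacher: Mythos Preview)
Your proposal is a plan rather than a proof: the reinsertion step, which you correctly flag as ``the main obstacle,'' is never carried out. The inductive scaffold does not buy much. After deleting a leaf $v\notin X$ and invoking the hypothesis, you obtain \emph{some} one-crossing layout $\D'$ of $\T'$, with no control over which one; to argue that \emph{some} one-crossing layout admits reinsertion of $v$ you would need either a classification of all one-crossing layouts of $\T'$ (not available inductively) or the assertion that failure of all four placement conditions forces a second cross-responsible set through the edge at $v$. The latter is exactly the positional analysis you postpone: one must know on which edge of $L_{\T[X]}$ the left endpoint of that matching edge attaches and on which edge of $R_{\T[X]}$ its right endpoint attaches, and rule out the bad pairs. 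That analysis is the whole theorem, and it is not shortened by having a one-crossing layout of $\T'$ in hand.

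The paper takes precisely the ``induction-free variant'' you describe in your last sentence, and this is where all the content lies. It splits on whether $\T[X]\simeq\K_1$ or $\T[X]\simeq\K_2$, and for each case determines the admissible \emph{scar-types} $(e,f)$ of an edge $m\in\sigma_{\T}\setminus X$: the edge $e$ of $L_{\T[X]}$ and the edge $f$ of $R_{\T[X]}$ carrying the attachment points of $m$. Uniqueness of $X$ is used repeatedly to exclude scar-types, by showing that a forbidden pair lets one replace an edge of $X$ by $m$, or combine $m$ with three edges of $X$, to produce a second cross-responsible set. In the $\K_1$ case the upshot is that every $m$ has at least one scar on the root edge and none on a leaf edge; in the $\K_2$ case a list of nine scar-types survives, together with a further restriction forbidding the two ``inner'' types from occurring simultaneously. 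With this in hand, the paper deletes one edge of $X$ (and, in one $\K_2$ subcase, also a packet of edges of a fixed scar-type), takes a planar layout of the remaining tanglegram, and reinserts the deleted material at explicitly named positions to obtain a layout with a single crossing. Your final sentence is the right roadmap; what is missing is the case split and the scar-type lemmas that make it go through.
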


This would mirror the similar result by Czabarka, Helm and T\'oth which shows that graphs with exactly $k$ cross-inducing subgraphs have crossing number at most $O(k^2)$ and that if $k\in\{0,1,2\}$ that the bound on the crossing number is in fact $k$ itself \cite{geza}.

The proof is shown in two parts: Section~\ref{sec:k1} deals with the case when the cross-responsible set induces a $\K_1$, and Section~\ref{sec:k2}
deals with the case when the cross-responsible set induces a $\K_2$.

We consider this paper as the first step  in our program of classification of 2-crossing-critical tanglegrams.

It remains an open question whether the tangle crossing number of the class of tanglegrams 
with exactly $k$ cross-responsible sets is bounded for every $k$.

\begin{figure}
\centering 
			\begin{tikzpicture}
			[scale=1.2,
			] 		


	\node[vertex] (r1) at (-.5,0) {};
       	\node[circle,draw]  (u1) at (.5,.5) {$a$}; 
        \node[circle,draw]  (u12) at (1,.25) {$2$};
        \node[circle,draw]  (u11) at (1,.75) {$1$};
        \node[vertexb]  (D) at (1.25,.125) {};
        \node[vertexb]  (C) at (1.25,.375) {};
        \node[vertexb]  (B) at (1.25,.625) {};
        \node[vertexb]  (A) at  (1.25,.875) {};
        \node[vertexb]  (E) at (1.25,-.125) {};
        \node[vertexb]  (F) at (1.25,-.375) {};
        \node[vertexb]  (G) at (1.25,-.625) {};
        \node[vertexb] (H)  at (1.25,-.875) {};
        \node[circle,draw]  (d11) at (1,-.75) {$c$};
        \node[circle,draw]  (d12) at (1,-.25) {$b$};
        \node[circle,draw]  (d1) at (.5,-.5) {$3$}; 
        
	\draw[thickeredge] (u1)--(u11)--(A);
	\draw[thickeredge] (u1)--(u12)--(D);
	\draw[thickeredge] (u12)--(C);
	\draw[thickeredge] (u11)--(B);
	\draw[thickeredge] (u1)--(r1)--(d1);
	\draw[thickeredge] (d1)--(d12)--(E);
	\draw[thickeredge] (d1)--(d11)--(H);
	\draw[thickeredge] (d11)--(G);
	\draw[thickeredge] (d12)--(F);
		
	\node[vertex] (r2) at (3.5,0) {};
       	\node[vertex]  (u2) at (2.5,.5) {}; 
        \node[vertex]  (u22) at (2,.25) {};
        \node[vertex]  (u21) at (2,.75) {};
        \node[vertexb]  (F1) at (1.75,.125) {};
        \node[vertexb]  (C1) at (1.75,.375) {};
        \node[vertexb]  (E1) at (1.75,.625) {};
        \node[vertexb]  (A1) at  (1.75,.875) {};
        \node[vertexb]  (B1) at (1.75,-.123) {};
        \node[vertexb]  (G1) at (1.75,-.375) {};
        \node[vertexb]  (D1) at (1.75,-.625) {};
        \node[vertexb] (H1)  at (1.75,-.875) {};
        \node[vertex]  (d22) at (2,-.75) {};
        \node[vertex]  (d21) at (2,-.25) {};
        \node[vertex]  (d2) at (2.5,-.5) {}; 

	\draw[thickeredge] (E1)--(u21)--(A1);
	\draw (u21)--(u2)--(u22);
	\draw[thickeredge] (F1)--(u22)--(C1);
	\draw (u21)--(E1);
	\draw (u2)--(r2);
	\draw (r2)--(d2);
	\draw (d2)--(d22); 
	\draw[thickeredge] (D1)--(d22)--(H1);
	\draw[thickeredge] (B1)-- (d21)--(G1);
	\draw (d21)--(d2);
	
	\draw[thickeredge] (A)--(A1);
	\draw[thickeredge] (B)--(B1);
	\draw[thickeredge] (C)--(C1);
	\draw[thickeredge] (D)--(D1);
	\draw[thickeredge] (E)--(E1);
	\draw[thickeredge] (F)--(F1);
	\draw[thickeredge] (G)--(G1);
	\draw[thickeredge] (H)--(H1);

\end{tikzpicture}
\caption{A tanglegram that contains a subdivided $K_{3,3}$ such that all vertices of the $K_{3,3}$ are in the left tree. Vertex classes are marked with numbers vs. letters, subdivided edges are bold.}
\label{fig:k33}
\end{figure}
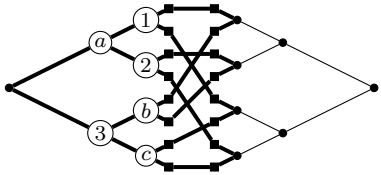

The relative hardness of this question compared to the graph versions is illustrated by the following:
For a tanglegram $T$ we define the associated graph $T^*$ as the graph obtained from $T$ by adding an edge between the roots of the left- and right-tree.
It was shown in \cite{tanglekurat},  $\Crt(T)\ge 1$ precisely when $\Cr(T^*)\ge 1$. As the maximum degree of $T^*$ is $3$, this gives that $\Crt(T)\ge 1$ precisely when $T^*$ contains a subdivided $K_{3,3}$.  Notably, $\K_1^*$ and $\K_2^*$ are subdivided $K_{3,3}$-s. However, it is easy to create a tanglegram $T$ such that $T^*$ 
has subdivided $K_{3,3}$-s that do not correspond to a copy of $\K_1^*$ or $\K_2^*$ -- we give an example on Figure~\ref{fig:k33}. As illustrated on Figure~\ref{fig:connections}, a single  cross-responsible set in a tanglegram may
create any number of copies of $\K_1^*$ (or $\K_2^*$) if the tanglegram has size large enough;  and we also exhibit
a family of tanglegrams with unbounded tangle crossing number such that the rectilinar crossing number of their associated graphs is $1$.

 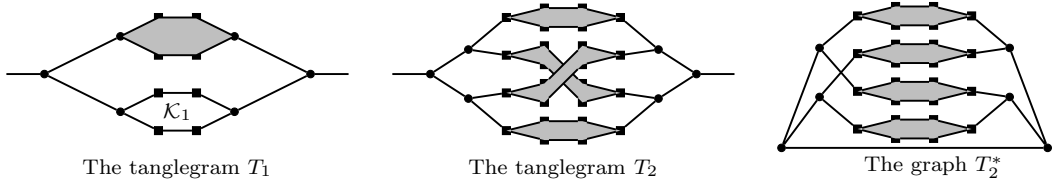
\begin{figure}
\centering 
\begin{tikzpicture}
\node[vertex] (lroot) at (-.5,0) {};
\node[vertex] (lroot1) at (.5,.5) {};
\node[vertexb] (ll1) at (1, .75) {};	
\node[vertexb] (ll2) at (1, 0.25) {};		
\node[vertexb] (rl1) at (1.5, .75) {};		
\node[vertexb] (rl2) at (1.5, 0.25) {};
\node[vertex] (rroot1)	 at (2,.5) {};	
\node[vertex] (rroot)	 at (3,0) {};	
\draw[thickedge,fill=gray!50] (lroot1.center)--(ll1.center)--(rl1.center)--(rroot1.center)--(rl2.center)--(ll2.center)--(lroot1.center);
\node[] at (1.25,-.5) {$\K_1$};
\node[vertex] (lroot2) at (.5,-.5) {};
\node[vertexb] (ll3) at (1, -.75) {};	
\node[vertexb] (ll4) at (1, -0.25) {};		
\node[vertexb] (rl3) at (1.5,-.75) {};		
\node[vertexb] (rl4) at (1.5, -0.25) {};
\node[vertex] (rroot2)	 at (2,-.5) {};	
\draw[thickedge] (lroot2)--(ll3)--(rl3)--(rroot2)--(rl4)--(ll4)--(lroot2);
\draw[thickedge] (lroot1)--(lroot)--(lroot2);
\draw[thickedge] (rroot1)--(rroot)--(rroot2);
\draw[thickedge] (lroot)--(-1,0);
\draw[thickedge] (rroot)--(3.5,0);
\node at (1.25,-1.25) {The tanglegram $T_1$};
\end{tikzpicture}
\quad
\begin{tikzpicture}
\node[vertex] (lroot) at (-.5,0) {};
\node[vertex] (lroot1) at (0,.325) {};
\node[vertexb] (ll1) at (.5, .75) {};	
\node[vertexb] (ll2) at (.5, 0.25) {};		
\node[vertexb] (rl1) at (2, .75) {};		
\node[vertexb] (rl2) at (2, 0.25) {};
\node[vertexb] (la1) at (1,.875) {};
\node[vertexb] (la2) at (1,.625) {};
\node[vertexb] (la3) at (1,.375) {};
\node[vertexb] (la4) at (1,.125) {};
\node[vertexb] (la5) at (1,-.125) {};
\node[vertexb] (la6) at (1,-.375) {};
\node[vertexb] (la7) at (1,-.625) {};
\node[vertexb] (la8) at (1,-.875) {};
\node[vertexb] (ra1) at (1.5,.875) {};
\node[vertexb] (ra2) at (1.5,.625) {};
\node[vertexb] (ra3) at (1.5,.375) {};
\node[vertexb] (ra4) at (1.5,.125) {};
\node[vertexb] (ra5) at (1.5,-.125) {};
\node[vertexb] (ra6) at (1.5,-.375) {};
\node[vertexb] (ra7) at (1.5,-.625) {};
\node[vertexb] (ra8) at (1.5,-.875) {};
\node[vertex] (rroot1)	 at (2.5,.325) {};	
\node[vertex] (rroot)	 at (3,0) {};	
\node[vertex] (lroot2) at (0,-.325) {};
\node[vertexb] (ll3) at (.5, -.25) {};	
\node[vertexb] (ll4) at (.5, -0.75) {};		
\node[vertexb] (rl3) at (2,-.25) {};		
\node[vertexb] (rl4) at (2, -0.75) {};
\node[vertex] (rroot2)	 at (2.5,-.325) {};	
\draw[thickedge] (ll2)--(lroot1)--(ll1);
\draw[thickedge] (rl2)--(rroot1)--(rl1);
\fill[gray!50] (ll1.center)--(la1.center)--(ra1.center)--(rl1.center)--(ra2.center)--(la2.center)--(ll1.center);
\draw[thickedge] (ll1)--(la1)--(ra1)--(rl1)--(ra2)--(la2)--(ll1);
\fill[gray!50] (ll2.center)--(la3.center)--(ra5.center)--(rl3.center)--(ra6.center)--(la4.center)--(ll2.center);
\draw[thickedge] (ll2)--(la3)--(ra5)--(rl3)--(ra6)--(la4)--(ll2);
\fill[gray!50] (rl2.center)--(ra3.center)--(la5.center)--(ll3.center)--(la6.center)--(ra4.center)--(rl2.center);
\draw[thickedge] (rl2)--(ra3)--(la5)--(ll3)--(la6)--(ra4)--(rl2);
\fill[gray!50] (ll4.center)--(la7.center)--(ra7.center)--(rl4.center)--(ra8.center)--(la8.center)--(ll4.center);
\draw[thickedge] (ll4)--(la7)--(ra7)--(rl4)--(ra8)--(la8)--(ll4);
\draw[thickedge] (ll4)--(lroot2)--(ll3);
\draw[thickedge] (rl3)--(rroot2)--(rl4);
\draw[thickedge] (lroot1)--(lroot)--(lroot2);
\draw[thickedge] (rroot1)--(rroot)--(rroot2);
\draw[thickedge] (lroot)--(-1,0);
\draw[thickedge] (rroot)--(3.5,0);
\node at (1.25,-1.25) {The tanglegram $T_2$};
\end{tikzpicture}
\quad
\begin{tikzpicture}
\node[vertex] (lroot) at (-.5,-1) {};
\node[vertex] (lroot1) at (0,.325) {};
\node[vertexb] (ll1) at (.5, .75) {};	
\node[vertexb] (ll3) at (.5, 0.25) {};		
\node[vertexb] (rl1) at (2, .75) {};		
\node[vertexb] (rl2) at (2, 0.25) {};
\node[vertexb] (la1) at (1,.875) {};
\node[vertexb] (la2) at (1,.625) {};
\node[vertexb] (la5) at (1,.375) {};
\node[vertexb] (la6) at (1,.125) {};
\node[vertexb] (la3) at (1,-.125) {};
\node[vertexb] (la4) at (1,-.375) {};
\node[vertexb] (la7) at (1,-.625) {};
\node[vertexb] (la8) at (1,-.875) {};
\node[vertexb] (ra1) at (1.5,.875) {};
\node[vertexb] (ra2) at (1.5,.625) {};
\node[vertexb] (ra3) at (1.5,.375) {};
\node[vertexb] (ra4) at (1.5,.125) {};
\node[vertexb] (ra5) at (1.5,-.125) {};
\node[vertexb] (ra6) at (1.5,-.375) {};
\node[vertexb] (ra7) at (1.5,-.625) {};
\node[vertexb] (ra8) at (1.5,-.875) {};
\node[vertex] (rroot1)	 at (2.5,.325) {};	
\node[vertex] (rroot)	 at (3,-1) {};	
\node[vertex] (lroot2) at (0,-.325) {};
\node[vertexb] (ll2) at (.5, -.25) {};	
\node[vertexb] (ll4) at (.5, -0.75) {};		
\node[vertexb] (rl3) at (2,-.25) {};		
\node[vertexb] (rl4) at (2, -0.75) {};
\node[vertex] (rroot2)	 at (2.5,-.325) {};	
\draw[thickedge] (ll2)--(lroot1)--(ll1);
\draw[thickedge] (rl2)--(rroot1)--(rl1);
\fill[gray!50] (ll1.center)--(la1.center)--(ra1.center)--(rl1.center)--(ra2.center)--(la2.center)--(ll1.center);
\draw[thickedge] (ll1)--(la1)--(ra1)--(rl1)--(ra2)--(la2)--(ll1);
\fill[gray!50] (ll2.center)--(la3.center)--(ra5.center)--(rl3.center)--(ra6.center)--(la4.center)--(ll2.center);
\draw[thickedge] (ll2)--(la3)--(ra5)--(rl3)--(ra6)--(la4)--(ll2);
\fill[gray!50] (rl2.center)--(ra3.center)--(la5.center)--(ll3.center)--(la6.center)--(ra4.center)--(rl2.center);
\draw[thickedge] (rl2)--(ra3)--(la5)--(ll3)--(la6)--(ra4)--(rl2);
\fill[gray!50] (ll4.center)--(la7.center)--(ra7.center)--(rl4.center)--(ra8.center)--(la8.center)--(ll4.center);
\draw[thickedge] (ll4)--(la7)--(ra7)--(rl4)--(ra8)--(la8)--(ll4);
\draw[thickedge] (ll4)--(lroot2)--(ll3);
\draw[thickedge] (rl3)--(rroot2)--(rl4);
\draw[thickedge] (lroot1)--(lroot)--(lroot2);
\draw[thickedge] (rroot1)--(rroot)--(rroot2);
\draw[thickedge] (lroot)--(rroot);
\node at (1.5,-1.25) {The graph $T_2^*$};
\end{tikzpicture}\caption{The gray shaded regions represent a planar tanglegram $F_m$ with $m$ matching edges drawn in a planar way. If two shaded regions
cross, all matching edges between the two copies cross. The tanglegram $T_1$ on the left has a single cross-responsible set that induces a $\K_1$, 
but its associated graph $T_1^*$ has  $m+1$ subdivided $K_{3,3}$-s: the cross-responsible set with any matching edge from $F_m$ induces a 
subtanglegram in $T_1$ that is a subdivision of the graph $\K_1^*$. The middle picture is an optimal layout of tanglegram $T_2$ with tangle crossing number $m^2$, 
and the rightmost picture is an optimal rectilinear drawing of its associated graph $T_2^*$ with exactly one crossing.} 
\label{fig:connections}
\end{figure}

\section{Definitions and basic facts on trees}\label{sec:treedefs}

The following definition of rooted trees slightly differs from the usual, as we require the root to have degree one. Later this helps us to handle scars  in a consistent manner.
\begin{definition} {\em $T$ is a rooted tree} if it is a tree with at least two vertices and a designated root vertex $r_T$ that has degree $1$.
The {\em leaves of $T$} are the non-root vertices  of degree $1$, and the {\it internal vertices} are the vertices of degree at least $2$.
Let $\LL(T)$ denote the set of leaves, and $\II(T)$ denote the set of internal vertices   of $T$.
Given a rooted tree $T$, {\em the tree order $\preceq_T$} is a partial order defined on the vertices of $T$ by 
$x\preceq_T y$ if $x$ lies
on the $r_T$-$y$ path. If $x\preceq_T y$, then we say that {\em $x$ is an ancestor of $y$} or   {\em $y$ is a descendant of $x$      
 in $T$}; and we say that  {\em $x$ is the parent of $y$}, if $x\preceq_T y$ and $x$ is adjacent to $y$. For two vertices $x,y$,  we denote by $x\land_T y$ the largest common lower bound of $x$ and $y$ in $\preceq_T$. 
Two leaves $\lambda_1,\lambda_2$ of $T$ {\em form a cherry} if they have the same parent, i.e., they are at distance $2$ in $T$. An {\em isomorphism} between two rooted trees $T_1,T_2$ is a graph isomorphism between them that maps root to root. If an isomorphism exists, we write $T_1\simeq T_2$.
\end{definition} 

\begin{definition} A {\em  plane  drawing of the rooted tree $T$} is a drawing in the plane, such that 
\begin{itemize}
\item all leaves are incident to a straight line, called 
 the {\em leaf-line},
\item  all non-leaf vertices of the tree are in the same open half-plane of the line, 
\item for any pair of tree vertices $x\ne y$, if $x\preceq_T y$, 
then $y$ is closer to the leaf-line than $x$, 
\item all edges of the tree are drawn in straight line segments, 
\item and the line segments of the edges do not cross. 
\end{itemize}
For a plane drawing $\D$ of the rooted tree $T$, we can speak about the counterclockwise cyclic order of degree 1 vertices, i.e., the leaves and the root of $T$. There is a unique interval in the cyclic order that contains all the leaves but does not contain 
the root. This interval inherits an order from the cyclic order. This order on the leaves is called 
{\em the leaf order $\vec{\ell}_{\D}$ associated with $\D$}  on $\LL(T)$. The leaf order agrees with one of the two 
natural orders of the leaf vertices on the leaf-line  of $\D$.
We deem  two plane  drawings $\D$, $\D^{\prime}$ of the tree $T$  {\em equivalent},  
if $\vec{\ell}_{\D}=\vec{\ell}_{\D^{\prime}}$. 
\end{definition}
For our purposes, there will be no difference between equivalent drawings.

\begin{definition} Let $T$ be a rooted tree with $|\LL(T)|=n$, and let $\vec{\ell}=(\lambda_1,\ldots,\lambda_n)$ be an order of the elements of $\LL(T)$. We call
$\vec{\ell}$  {\em  consistent with $T$}, if there is a plane drawing $\D$ of $T$ such $\vec{\ell}=\vec{\ell}_{\D}$.
\end{definition}

The following Lemma is easy to see and well-known. We prove it for the sole reason that this lemma is a key tool in our proofs.
\begin{lemma}\label{lm:consistent} Let $T$ be a rooted tree and let $\vec{\ell}$ be an order of the elements of $\LL(T)$. 
$\vec{\ell}$ is consistent with $T$ 
if and only if for every $v\in \II(T)$ the set of leaves that are descendants of $v$ form a contiguous segment of $\vec{\ell}$.\\
In particular, if two leaves $x,y$ form a cherry in $T$, then in any consistent order $\vec{\ell}$ of $\LL(T)$ the leaves $x,y$ are next to each other. Moreover, if $\vec{\ell}$ is  consistent with $T$, and  $\vec{\ell}^{\prime}$ is obtained from $\vec{\ell}$ by interchanging $x$ and $y$, then $\vec{\ell}^{\prime}$ is also consistent with $T$. 
\end{lemma}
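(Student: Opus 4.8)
The plan is to prove Lemma~\ref{lm:consistent} by directly analyzing plane drawings of the rooted tree $T$ and exploiting the recursive structure of the tree order. I will split the proof into the two implications.

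\emph{Necessity.} Suppose $\vec{\ell}$ is consistent with $T$, witnessed by a plane drawing $\D$ with $\vec{\ell}=\vec{\ell}_{\D}$. Fix an internal vertex $v$, and let $D_v$ denote the set of leaves that are descendants of $v$. I would argue that the straight-line edges emanating downward (toward the leaf-line) from the subtree rooted at $v$ occupy a region of the half-plane that, together with the leaf-line, bounds a topological disk whose only intersection with the leaf-line is an interval; since edges do not cross and $v$ is strictly farther from the leaf-line than all its proper descendants, every leaf in $D_v$ lies inside this interval and every leaf not in $D_v$ lies outside it. Hence $D_v$ is a contiguous segment of the order of leaves on the leaf-line, i.e. of $\vec\ell_{\D}$. (One has to be slightly careful that $\vec{\ell}_{\D}$ is the restriction of the leaf-line order to the interval of the cyclic order avoiding the root, but removing the root does not break contiguity of $D_v$ because the root is not a descendant of $v$.)

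\emph{Sufficiency.} Conversely, suppose $\vec{\ell}=(\lambda_1,\dots,\lambda_n)$ is an order of $\LL(T)$ such that for every internal $v$ the descendant-leaf set $D_v$ is a contiguous segment of $\vec{\ell}$. I would construct a plane drawing realizing $\vec{\ell}$ by induction on $|T|$ (or on the number of internal vertices). Place the leaves on the leaf-line in the order $\vec{\ell}$. Let $r_T$'s unique neighbor be $w$; if $w$ is a leaf the tree is trivial. Otherwise $w$ has children $c_1,\dots,c_d$ ($d\ge 2$), and the sets $D_{c_1},\dots,D_{c_d}$ partition $\LL(T)$ into contiguous segments (contiguity of each $D_{c_i}$ is given; that they partition into \emph{consecutive} blocks follows because any two of them are disjoint contiguous segments whose union $D_w=\LL(T)$ is contiguous). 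Reorder the children so the blocks appear left to right, recursively draw each subtree rooted at $c_i$ above its block (the hypothesis restricts to each subtree), route the edges $w c_i$ and then $r_T w$ above everything without crossings, and place $r_T$ at the far left (or right) so it is the extreme degree-$1$ vertex in the cyclic order — this makes $\vec{\ell}$ the induced leaf order.

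\emph{The ``in particular'' clause.} If $x,y$ form a cherry with common parent $p$, then $D_p=\{x,y\}$, so in any consistent $\vec{\ell}$ these two leaves form a contiguous segment of length $2$, i.e. are adjacent. Swapping them leaves $D_v$ unchanged for every internal $v\ne p$ and replaces $D_p=\{x,y\}$ by itself, so the characterization is still satisfied and $\vec{\ell}^{\,\prime}$ is consistent. The main obstacle I anticipate is purely expository rather than mathematical: making the topological claim in the necessity direction (that the subtree below $v$ ``cuts out'' an interval of the leaf-line) rigorous without drawing pictures, since it relies on the Jordan curve theorem applied to the region bounded by the drawn edges and a piece of the leaf-line; I would state it using the planarity of the drawing and the monotonicity condition ($y$ closer to the leaf-line than $x$ whenever $x\preceq_T y$) to keep it short, and lean on induction to avoid a heavy appeal to topology.
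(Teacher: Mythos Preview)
Your proposal is correct and follows essentially the same approach as the paper: the paper also treats the forward implication as obvious (you give more detail via a Jordan-curve style sketch, which is fine), proves the converse by induction on the number of internal vertices by decomposing at the children of the root's neighbor and recursively drawing the resulting subtrees side by side, and derives the cherry statement from the characterization. The only difference is expository.
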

\begin{proof}
The statement about the leaves that form a cherry follows from the first part, so it is enough to prove that. 
If $\vec{\ell}$ is a consistent ordering of $\LL(T)$, then it is obvious that for any $v\in \II(T)$ the set of leaves that are descendants of $v$ form a contiguous segment of $\vec{\ell}$, so we need to prove the converse of this.
We will use induction on $|\II(T)|$.
If $|\II(T)|\le 1$, then any ordering of the leaves is consistent, so the statement is true. 
Assume that $|\II(T)|=k\ge 2$, and the statement is true for any tree $T^{\star}$ with
$|\II(T^{\star})|<k$. Let $\vec{\ell}$ be an ordering of $\LL(T)$ such that for any $v\in \II(T)$ the set of leaves that are descendants of $v$ form a contiguous segment of $\vec{\ell}$. Let $w_1,\ldots,w_t$ be the children of  $v$, and denote by $T_i$ the tree rooted at $v$ 
obtained from $T$ by removing the root
and all vertices incomparable with $w_i$ in the tree order. It is obvious that a plane drawing of $T$ can be obtained from putting 
standard plane drawings of each $T_i$ in an arbitrary order next to each other on a common leaf line, replacing their root vertex with a common vertex  
above, and connecting that vertex to a root vertex. As in $\vec{\ell}$ the leaves of $T_i$ form a contiguous segment $\vec{\ell}_i$, and  the ordering 
$\vec{\ell}_i$ is consistent by the induction hypothesis and the fact that $|\II(T_i)|<k$,
the statement follows.
\end{proof}
\begin{definition} A {\em rooted binary tree} $T$ is a rooted tree where every internal vertex has degree $3$. 
An edge of $T$ is a leaf-edge (root-edge)  if it is incident to a leaf (to the root).
\end{definition} 

\begin{definition} If $T$ is a rooted binary tree, and $S\subseteq\LL(T)$, {\em the binary subtree $T[S] $ induced by $S$} is obtained as follows:
\begin{itemize}
\item first, let 
$T\llbracket S\rrbracket $ be the rooted subtree of $T$ obtained from $T$ by taking the union of the $r_T$-$s$ paths for all $s\in S$;
\item and then $T[S]$ is obtained from $T\llbracket S\rrbracket $ by suppressing the degree $2$ (subdividing)  vertices. 
\end{itemize}
In this way, every vertex of $T[S]$ can be  identified with a unique  vertex of $T$.
\end{definition}
Note that the induced binary subtree of $T$ is not a subtree in the usual sense. Induced binary subtrees are topical in phylogenetics: if $T$ is the true phylogenetic tree for the taxa present in the leaves,
then the true phylogenetic tree for a subset of those taxa will be the binary subtree induced by that subset of taxa \cite{SempleSteel}.
\begin{definition} \label{rep_path}
Given a binary tree $T^*$, such that      $T[S]\simeq T^{\star}$,  we say that $T\llbracket S\rrbracket $ is a {\em copy} of $T^{\star}$ in $T$.
By the isomorphism, any edge $e$ of $T[S]$ corresponds to a unique {\em representative path} $P_e$ between two vertices of $T\llbracket S\rrbracket $
that are identified with the endvertices of $e$. 
\end{definition}
Next, we put {\em scars} on the edges of $T[S]$. 
\begin{definition} \label{scar}
 In the context of Definition~\ref{rep_path}, we put {\em scars} on the edges of $T[S]$, namely, edge $e$ will get a scar for
every degree 2 vertex of $P_e$. 
Assume that $v\in\LL(T)\setminus S$. We say that a scar on edge $e$ of $T[S]$ was {\em created} for  $v$  if the degree 2 vertex $x$ of $T\llbracket S\rrbracket $ that
corresponds to the scar is the first vertex of the $v$-$r_T$ path starting from $v$  on $P_e$.
\end{definition}
If for $u,v\in\LL(T)\setminus S$, the $u$-$r_T$ and $v$-$r_T$ paths first hit $P_e$ in $T\llbracket S\rrbracket $ in the {\em same} vertex, then they produce the same 
scar on the edge $e$ of $T[S]$, if they first hit  $P_e$ in $T\llbracket S\rrbracket $ in different vertices, then they produce distinct 
scars on the edge $e$ of $T[S]$. If needed, we can recall the leaf set, whose elements created  any particular scar.

We can extend     the tree order $\preceq_{T[S]}$ such that the scars are included  in the underlying set of the tree order, by carrying over the tree order $\preceq_{T\llbracket S\rrbracket}$.

The following lemma is obvious:
\begin{lemma}\label{lm:leafreplace} Let $T$ be a rooted binary tree, $S\subseteq\LL(T)$, with $|S|\ge 2$ and $\lambda_1\in S$. Let $v$ be the parent of $\lambda_1$
 in $T[S]$. If $\lambda_2\in\LL(T)\setminus S$ has its scar in $T[S]$ on one of the edges incident upon $v$, then $T[S]\simeq T[(S\setminus\{\lambda_1\})\cup\{\lambda_2\}]$.
\end{lemma}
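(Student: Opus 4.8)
The plan is to prove the isomorphism by exhibiting it. Write $S':=(S\setminus\{\lambda_1\})\cup\{\lambda_2\}$ and $T_0:=T[S\setminus\{\lambda_1\}]$; this $T_0$ is defined because the other child of $v$ in $T[S]$ carries at least one leaf of $S$, so $S\setminus\{\lambda_1\}\ne\emptyset$. I claim the desired isomorphism is the map that fixes $\LL(T_0)=S\setminus\{\lambda_1\}$ pointwise and sends $\lambda_1\mapsto\lambda_2$. To establish this I would factor both $T[S]$ and $T[S']$ through $T_0$ by one and the same elementary surgery, reducing the whole statement to a single claim about the location of a vertex.

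First I would describe how $T[S]$ sits over $T_0$. Identify $v$, its parent $u$, and its other child $w$ in $T[S]$ with the corresponding vertices of $T$ (allowing $u=r_T$). Passing from $T\llbracket S\rrbracket$ to $T\llbracket S\setminus\{\lambda_1\}\rrbracket$ deletes exactly one edge, the edge at $v$ pointing toward $\lambda_1$; every other edge of $T\llbracket S\rrbracket$ still lies on some $r_T$-$s$ path with $s\in S\setminus\{\lambda_1\}$. Hence $v$ drops to degree $2$, the vertices strictly between $v$ and $\lambda_1$ together with $\lambda_1$ vanish, and every surviving vertex keeps its degree. So $v$ becomes a scar of $T_0$, lying on a single edge $e''$ of $T_0$ whose representative path in $T$ is $P_{vu}\cup P_{vw}$ and whose endpoints in $T_0$ are $u$ and $w$, and $T[S]$ is exactly $T_0$ with the edge $e''$ subdivided at $v$ and a pendant leaf $\lambda_1$ attached at the new vertex. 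The same reasoning applied to $S'$ gives: if $y$ denotes the vertex at which the $\lambda_2$-$r_T$ path first meets $T\llbracket S\setminus\{\lambda_1\}\rrbracket$, then $y$ is a scar of $T_0$ and $T[S']$ is $T_0$ with the edge of $T_0$ through $y$ subdivided at $y$ and a pendant leaf $\lambda_2$ attached. Since subdividing a prescribed edge once and hanging a new leaf at the subdivision vertex yields, up to the identity-on-$T_0$ isomorphism that exchanges the two new leaves, the same rooted tree no matter where along that edge one subdivides, the lemma reduces to showing that $y$ lies on the edge $e''$.

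For this, let $x$ be the vertex at which the $\lambda_2$-$r_T$ path first meets $T\llbracket S\rrbracket$; by hypothesis $x$ is a scar of $T[S]$ among the interior vertices of one of $P_{v\lambda_1}$, $P_{vw}$, $P_{vu}$, whereas the interior vertices of $P_{e''}$ are precisely those of $P_{vu}$, together with $v$, together with those of $P_{vw}$. If $x$ is interior to $P_{vu}$ or to $P_{vw}$, then $x$ is an ancestor of $w$ and so survives in $T\llbracket S\setminus\{\lambda_1\}\rrbracket$, while the $\lambda_2$-$r_T$ path strictly below $x$ does not; hence $y=x$, which (being a scar of $T_0$) lies on $e''$. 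If instead $x$ is interior to $P_{v\lambda_1}$, then $x$ together with all vertices strictly between $v$ and $x$ are exactly the vertices killed when $\lambda_1$ is removed, so the $\lambda_2$-$r_T$ path first re-enters $T\llbracket S\setminus\{\lambda_1\}\rrbracket$ at $v$ itself; hence $y=v$, which again lies on $e''$. In all cases $y$ is on $e''$, which finishes the argument. The one delicate point I foresee is the degree bookkeeping in $T\llbracket S\setminus\{\lambda_1\}\rrbracket$: one must verify that deleting $\lambda_1$ changes the branch/scar/leaf status of $v$ alone, so that $e''$ really is a single well-defined edge with endpoints $u$ and $w$; I expect this to be the only real work. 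A surgery-free alternative uses that a rooted binary tree is determined up to leaf-fixing isomorphism by its set of rooted triples, and checks --- via the identity $\lambda_i\land_T c=v\land_T c$ for any leaf $c$ that is not a descendant of $v$ --- that the rooted triple on $\{a,b,\lambda_1\}$ agrees with that on $\{a,b,\lambda_2\}$ for all $a,b\in S\setminus\{\lambda_1\}$; it meets the same case distinction.
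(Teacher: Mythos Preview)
Your argument is correct. The reduction to $T_0=T[S\setminus\{\lambda_1\}]$, the identification of the edge $e''$ with representative path $P_{vu}\cup P_{vw}$, and the three-case analysis locating $y$ on $e''$ all go through as written; the degree bookkeeping you flag as the one delicate point is exactly right and is handled by the observation that $u$ and $w$ each retain their status (root/leaf/branching vertex) in $T\llbracket S\setminus\{\lambda_1\}\rrbracket$ because their incident edges in $T\llbracket S\rrbracket$ are witnessed by leaves of $S$ other than $\lambda_1$.

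As for comparison with the paper: there is nothing to compare against. The paper states the lemma with the preamble ``The following lemma is obvious'' and gives no proof. Your write-up is a careful unpacking of what the authors take for granted; in particular your factoring of both $T[S]$ and $T[S']$ through the common subtree $T_0$ makes the ``obvious'' isomorphism explicit. The rooted-triples alternative you sketch at the end would also work and is arguably closer in spirit to how a phylogeneticist would phrase the same fact.
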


Now we are ready to define  {\em induced subdrawings}. The following definition uses the fact that by Lemma~\ref{lm:consistent},
for any planar drawing $\D$ of $\T$, the subsequence formed by the elements of any $S\subseteq\LL(T)$ in $\vec{\ell}_{\D}$ is a consistent order of
$T[S]$.
\begin{definition} Let $T$ be a rooted binary tree, and $S\subseteq\LL(T)$. Let $\D$ be a plane drawing of $T$. 
Then there are natural plane drawings of $T\llbracket S\rrbracket $ and $T[S]$, derived from $\D$. We call these drawings {\em subdrawings}
of $\D$ induced by $S$.
 In these subdrawings, the leaf order
of $S$ is $\vec{\ell}_{\D}$ restricted to $S$.  
 \end{definition}

\section{Definitions and basic facts on tanglegrams}\label{sec:tangdefs}

\begin{definition} A {\em tanglegram $\T$} is a graph, desribed by a triplet $(L_{\T},R_{\T},\sigma_{\T})$, where $L_{\T},R_{\T}$ are rooted binary trees with $|\LL(L_{\T})|=|\LL(R_{\T})|$, and $\sigma_{\T}$ is a perfect matching between $\LL(L_{\T})$ and $\LL(R_{\T})$. $L_{\T}$ and $R_{\T}$ are referred to as the left- and right-trees of $\T$.  Two tanglegrams $\T_1,\T_2$ are {\em isomorphic} if there is a graph isomorphism $f$ between them, such that $f(r_{L_{\T_1}})=r_{L_{\T_2}}$
and $f(r_{R_{\T_1}})=r_{R_{\T_2}}$.
\end{definition}
Induced subtanglegram of a tanglegram is a key concept that corresponds to the concept of induced binary subtree of binary tree.
\begin{definition} Given a tanglegram $\T$ and a set of matching edges $Z\subseteq\sigma_{\T}$,  {\em the subtanglegram $\T[Z]$ induced by $Z$} is defined as follows:
$\sigma_{\T[Z]}=Z$, $L_{\T[Z]}=L_{\T}[S_1]$, and $R_{\T[Z]}=R_{\T}[S_2]$, where
  $S_1\subseteq\LL(L_{\T})$ and $S_2\subseteq\LL(R_{\T})$ are the set of leaves that are matched by $Z$. 
Let  $\T\llbracket Z\rrbracket $ denote the graph $L_{\T}\llbracket S_1\rrbracket\cup R_{\T}\llbracket S_2\rrbracket \cup Z$. If $\T^{\prime}\simeq \T[Z]$, then we say that $\T\llbracket Z\rrbracket$ is a {\em copy} of $\T^{\prime}$ in $\T$. 
\end{definition}
Next we transfer the concept of scar to tanglegrams.
\begin{definition}
Let $\T$ be a tanglegram and $Z\subseteq\sigma_{\T}$. Let $S_L$ and $S_R$ be the endvertices of the edges of $Z$ in $L_{\T}$ and $R_{\T}$ respectively.
 If $m\in\sigma_{\T}\setminus Z$ connects $\lambda_1\in\LL(L_{\T})$ and $\lambda_2\in\LL(R_{\T})$, then we define the {\em left-scar of $m$ in 
 $\T[Z]$} as the scar of $\lambda_1$ in $L_{\T}[S_L]$ as in Definition~\ref{scar}, and we define the {\em right-scar of $m$ in $\T[Z]$}  as the scar of $\lambda_2$ in $R_{\T}[S_R]$ similarly. We call a (left- or right-) scar
{\em an outside scar} if it is on a root-edge, and {\em an inside scar} otherwise.
 We call the {\em scar-type of $m\in\sigma_{\T}\setminus Z$ on $\T[Z]$}   the ordered pair $(e,f)$, if the left-scar of $m$ is on the edge $e$ of $L_{\T}[S_L]$ and the right-scar  of $m$ is on the edge $f$  of $R_{\T}[S_R]$. 
  \end{definition}
In the future, if $X\subset \sigma_T$, such that the endpoint set of $X$ in $L_\T$ is $S_L$, and  the endpoint set of $X$ in $R_\T$ is $S_R$, then we will use the following simplified notation:  $L_{\T}[X]$ for $L_{\T}[S_L]$, and $R_{\T}[X]$ for $R_{\T}[S_R]$.

\begin{definition} Let $\T$ be a tanglegram. A {\em layout of $\T$} is a drawing $\D$ where each edge is drawn as a straight line segment, the trees $L_{\T}$ and $R_{\T}$ are drawn with plane drawings, such that their leaf lines are parallel, and the two trees are
on the outside of the strip enclosed by the leaf lines. 
{\em A representation of a layout $\D$} of the tanglegram $\T$ is the pair $(\vec{\ell}_{L,\D},\vec{\ell}_{R,\D})$ of leaf-orders associated with the plane drawings of $L_{\T}$ and $R_{\T}$ in $\D$.
  Two layouts are  considered {\em equivalent} if they have the same representation. \\
  A layout of a tanglegram naturally defines a {\em sublayout} of every induced subtanglegram of it.
 {\em The crossing number $\Cr(\D)$ of the layout $\D$} is the number of unordered edge pairs that cross in the drawing. The {\em  tangle crossing number $\Crt(\T)$} of the tanglegram $\T$ is the minimum crossing number over all of its layouts. An {\em optimal layout} of $\T$ is a layout $\D$ with 
 $\Cr(\D)=\Crt(\T)$. 
\end{definition}

By definition, only matching edges can cross in a layout, and at most once, furthermore,  equivalent layouts have the same crossing number.
Moreover, the crossing number of a layout can be computed from its representation (note that the directions on the parallel leaf lines are  the opposite
direction for a viewer):
\begin{equation}\Cr(\D)
=\left\vert\left\{ \{\lambda_a\lambda_b,\lambda_c\lambda_d\}: \lambda_a\lambda_b,\lambda_c\lambda_d\in\sigma_{\T}, 
\lambda_a<_{\vec{\ell}_{L,\D}} \lambda_c, \lambda_b<_{\vec{\ell}_{R,\D}} \lambda_d\right\}.
\right\vert
\label{eq:crosslayout}
\end{equation}

\begin{definition} Let $\T$ be a tanglegram.
{\em $X\subseteq\sigma_{\T}$ is crossing-responsible} if $\T[X]\simeq \K_1$ or $\T[X]\simeq\K_2$. Note that if $X$ is cross-responsible, $|X|=4$.
 The collection of crossing-responsible 
edge-sets of $\T$ is denoted by $\XX_{\T}$. 
\end{definition}
 The following lemma is an obvious consequence of Lemma~\ref{lm:leafreplace}:
 \begin{lemma}\label{lm:edgereplace}
 Let $\T$ be a tanglegram, $X\subseteq\sigma_{\T}$ and $m_1\in X$ and $m_2\in\sigma\setminus X$. 
 Assume that $m_1$ connects
 the leaves $\lambda_1\in \LL(L_{\T})$ and $\lambda_2\in\LL(R_{\T})$. Let $G=\{g_1,g_2,g_3\}$ be the set of three edges incident upon the parent
 of $\lambda_1$ in $L_{\T[X]}$ and $H=\{h_1,h_2,h_3\}$ be the set three edges incident upon the parent of $\lambda_2$ in $R_{\T[X]}$. 
 If the scar-type of $m_2$ on $\T[X]$ is in the set $G\times H$, then $\T[(X\setminus\{m_1\})\cup\{m_2\}]\simeq\T[X]$.
 \end{lemma}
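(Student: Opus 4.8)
The plan is to deduce the claim from two independent applications of Lemma~\ref{lm:leafreplace}, one to the left tree and one to the right tree, and then to glue the resulting tree isomorphisms into a tanglegram isomorphism. First I would fix notation: let $m_2$ join $\mu_1\in\LL(L_{\T})$ and $\mu_2\in\LL(R_{\T})$, let $S_L,S_R$ be the left- and right-endpoint sets of $X$, and set $X'=(X\setminus\{m_1\})\cup\{m_2\}$, whose endpoint sets are $S_L'=(S_L\setminus\{\lambda_1\})\cup\{\mu_1\}$ and $S_R'=(S_R\setminus\{\lambda_2\})\cup\{\mu_2\}$. Unwinding the definition of scar-type, the hypothesis $(e,f)\in G\times H$ says precisely that the scar of $\mu_1$ in $L_{\T}[S_L]$ sits on one of the three edges incident upon $v_L$, the parent of $\lambda_1$ in $L_{\T}[S_L]=L_{\T[X]}$, and symmetrically that the scar of $\mu_2$ in $R_{\T}[S_R]$ sits on an edge incident upon $v_R$, the parent of $\lambda_2$. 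These are exactly the hypotheses of Lemma~\ref{lm:leafreplace} applied to $(L_{\T},S_L,\lambda_1,\mu_1)$ and to $(R_{\T},S_R,\lambda_2,\mu_2)$, which therefore yield rooted-tree isomorphisms $\phi_L\colon L_{\T}[S_L]\to L_{\T}[S_L']$ and $\phi_R\colon R_{\T}[S_R]\to R_{\T}[S_R']$.

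The part that requires care---and which I expect to be the main obstacle---is that a bare relation $\simeq$ is not enough to match up the two perfect matchings: I need these isomorphisms to fix the leaves common to source and target. Concretely, I will strengthen Lemma~\ref{lm:leafreplace} to the assertion that $\phi_L$ may be chosen to fix every leaf of $S_L\setminus\{\lambda_1\}$ and to send $\lambda_1\mapsto\mu_1$ (and likewise for $\phi_R$). To see this I would use the meet-vertex description of induced binary subtrees: the internal vertices of $L_{\T}[S_L]$ are exactly the distinct meets $s\land_{L_{\T}}s'$ with $s,s'\in S_L$, and the tree shape is determined by these. Splitting into the three cases according to which edge incident upon $v_L$ carries the scar of $\mu_1$---the leaf-edge $v_L\lambda_1$, the other child-edge, or the parent-edge---a short check shows that replacing $\lambda_1$ by $\mu_1$ leaves $s\land_{L_{\T}}s'$ unchanged for all $s,s'\in S_L\setminus\{\lambda_1\}$ and merely relocates the branch vertex $v_L$ along the corresponding representative path while preserving all other incidences. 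Hence the natural identification of $L_{\T}[S_L]$ with $L_{\T}[S_L']$ fixes $S_L\setminus\{\lambda_1\}$ pointwise and carries $\lambda_1$ to $\mu_1$, as required.

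With the leaf-fixing property in hand, I would define $f\colon \T[X]\to\T[X']$ by $f=\phi_L$ on $L_{\T}[S_L]$ and $f=\phi_R$ on $R_{\T}[S_R]$. Since $\phi_L,\phi_R$ are rooted-tree isomorphisms, $f$ is a graph isomorphism of the two trees sending each root to the corresponding root, so it remains only to verify that $f$ carries the matching $X$ onto $X'$. For a common edge $m=\lambda\lambda'\in X\setminus\{m_1\}$ we have $\lambda\in S_L\setminus\{\lambda_1\}$ and $\lambda'\in S_R\setminus\{\lambda_2\}$, whence $f(\lambda)=\lambda$, $f(\lambda')=\lambda'$, and $f(m)=m\in X'$; moreover $f(m_1)=f(\lambda_1\lambda_2)=\mu_1\mu_2=m_2\in X'$. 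Thus $f$ maps $X$ bijectively onto $X'$ and is therefore a tanglegram isomorphism, giving $\T[(X\setminus\{m_1\})\cup\{m_2\}]\simeq\T[X]$. The only genuine work is the leaf-fixing refinement of Lemma~\ref{lm:leafreplace}; once that is recorded, the gluing is purely formal, which is why the statement is flagged as obvious.
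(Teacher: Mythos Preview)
Your proposal is correct and follows exactly the route the paper has in mind: the paper simply declares the lemma ``an obvious consequence of Lemma~\ref{lm:leafreplace}'' without further argument, and your two independent applications of that lemma to $L_{\T}$ and $R_{\T}$ are precisely that consequence spelled out. You are right that the leaf-fixing refinement is the one point requiring care before the two tree isomorphisms can be glued into a tanglegram isomorphism; the paper suppresses this, but your case analysis on the three edges at $v_L$ handles it cleanly.
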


\begin{definition}
Let $\T$ be a tanglegram. Two matching edges $e,f\in\sigma$ ($e\ne f$) is called a {\em safe pair in $\T$} if $e,f$ are adjacent to two leaves that form a cherry 
in at least one of $L_{\T}$ and $R_{\T}$. Otherwise pair $e,f$ is an {\em unsafe pair in $\T$}.
\end{definition}

We can now observe
\begin{lemma} If $e,f\in\sigma_{\T}$ is a safe pair in the tanglegram $\T$, then $e,f$ does not cross  in any optimal layout of $\T$. 
\end{lemma}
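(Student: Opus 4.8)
The plan is to argue by contradiction. Suppose $\D$ is an optimal layout of $\T$ in which the safe pair $e,f$ does cross, and produce from it a layout with strictly fewer crossings, contradicting optimality. By hypothesis there is a cherry among the endpoints of $e,f$ in at least one of the two trees; say $\lambda_1,\lambda_2$ form a cherry in $L_{\T}$ with $\lambda_1$ incident to $e$ and $\lambda_2$ incident to $f$ (the case of a cherry in $R_{\T}$ is handled identically, exchanging the roles of the left and right trees). Write $e=\lambda_1\mu_1$ and $f=\lambda_2\mu_2$; since $e,f$ are distinct matching edges, $\lambda_1\ne\lambda_2$ and $\mu_1\ne\mu_2$.

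First I would use Lemma~\ref{lm:consistent}: because $\lambda_1,\lambda_2$ form a cherry in $L_{\T}$, they are consecutive in the leaf order $\vec{\ell}_{L,\D}$, and the order $\vec{\ell}'$ obtained from $\vec{\ell}_{L,\D}$ by transposing $\lambda_1$ and $\lambda_2$ is again consistent with $L_{\T}$. Keeping the right leaf order $\vec{\ell}_{R,\D}$ unchanged, the pair $(\vec{\ell}',\vec{\ell}_{R,\D})$ is the representation of a layout $\D'$ of $\T$, and since a layout is determined up to equivalence (hence up to crossing number) by its representation, $\Cr(\D')$ is well defined.

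Next I would compare $\Cr(\D)$ and $\Cr(\D')$ via the formula \eqref{eq:crosslayout}. The key observation is that transposing two \emph{consecutive} leaves in the left order changes the relative left-order of exactly one pair of leaves, namely $\{\lambda_1,\lambda_2\}$: any other leaf $\mu$ lies either before both of the two consecutive slots or after both of them, so its order relative to $\lambda_1$ and relative to $\lambda_2$ is unaffected by the transposition. Consequently, for every matching-edge pair other than $\{e,f\}$, both the relevant left-order comparison and the (unchanged) right-order comparison entering \eqref{eq:crosslayout} are the same in $\D$ and $\D'$, so its contribution is unchanged; in particular no new crossing is created. For the pair $\{e,f\}$ itself: since $e,f$ cross in $\D$ we may assume, after relabeling, $\lambda_1<_{\vec{\ell}_{L,\D}}\lambda_2$ and $\mu_1<_{\vec{\ell}_{R,\D}}\mu_2$, so $\{e,f\}$ contributes $1$ to $\Cr(\D)$; in $\D'$ the left endpoints are reversed while the right endpoints are unchanged, so $\{e,f\}$ contributes $0$. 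Hence $\Cr(\D')=\Cr(\D)-1<\Cr(\D)$, the desired contradiction.

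The only delicate point is the bookkeeping in the last paragraph: one must respect the orientation convention built into \eqref{eq:crosslayout} (the two leaf lines run in opposite directions for a viewer) and verify carefully that pairs of the form $\{e,g\}$ and $\{f,g\}$ gain no new crossings — which they do not, precisely because the relative left-order of $g$'s endpoint with respect to the cherry is preserved. I do not anticipate any genuine obstacle beyond this routine verification.
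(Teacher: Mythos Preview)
Your proposal is correct and follows essentially the same approach as the paper: assume an optimal layout $\D$ in which the safe pair crosses, use Lemma~\ref{lm:consistent} to swap the two cherry leaves (which are consecutive in the leaf order) to obtain a new layout, and observe that the only pair whose crossing status changes is $\{e,f\}$, yielding a layout with strictly fewer crossings. Your version spells out the bookkeeping via \eqref{eq:crosslayout} more explicitly than the paper does, but the argument is the same.
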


\begin{proof} Consider a layout $\D$ of $\T$. Assume to the contrary that $e,f$ is a safe pair, and they cross in $\D$. Let $\lambda_1,\lambda_2$
be the leaves that form a cherry and are incident upon $e$ and $f$. Let $\D^{\star}$ be the layout that we obtain from $\D$ by switching the leaf-order of
$\lambda_1,\lambda_2$ in $\vec{\ell}_{L,\D}$ or $\vec{\ell}_{R,\D}$, depending on in which tree they are. (Note that by Lemma~\ref{lm:consistent}, $\lambda_1,\lambda_2$ are next to each other in the leaf-order). 
The only edge-pair whose crossing status differs in $\D$ and $\D^{\star}$ is $e,f$, and therefore
$\D^{\star}$ has a strictly smaller crossing number than $\D$.
\end{proof}

Note that there are two sets of unsafe pairs of matching edges in $\K_1$, and the two sets are disjoint, and there are four sets of unsafe pairs in $\K_2$, which are not disjoint, and for any unsafe pair of $\K_1$ or $\K_2$, the tanglegram has an optimal drawing where only that pair crosses. (This is not true  for arbitrary tanglegrams.) 

\section{When the cross-responsible set induces \texorpdfstring{$\K_1$}{K1}}\label{sec:k1}

In this section we will prove Theorem~\ref{th:main} for tanglegrams, in which the cross-responsible set induces a $\K_1$. The strategy is determining the possible scar-types  on $\T[X]$ of the edges not in the cross-responsible set, and then using them to create a layout of the tanglegram with just one crossing.
 
\begin{lemma}\label{lm:k1notinside} Let $\T$ be a tanglegram with $\XX_{\T}=\{X\}$. Assume that $\T[X]\simeq\K_1$, and let $m\in\sigma_{\T}\setminus X$. Then $m$ has at least one outside scar in $T[X]$.
\end{lemma}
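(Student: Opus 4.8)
The plan is to argue by contradiction: suppose $m \in \sigma_{\T}\setminus X$ has two inside scars on $\T[X]$, one on an inside edge $e$ of $L_{\T}[X]$ and one on an inside edge $f$ of $R_{\T}[X]$. I will try to produce a second cross-responsible set $X'$ (necessarily inducing a $\K_1$ or $\K_2$), contradicting $\XX_{\T}=\{X\}$. The natural candidate is $X' = (X\setminus\{m_1\})\cup\{m\}$ for a well-chosen $m_1\in X$: by Lemma~\ref{lm:edgereplace}, if the scar-type $(e,f)$ of $m$ lies in $G\times H$ for the edge-triples $G,H$ incident on the parents (in $\T[X]$) of the endpoints of some $m_1\in X$, then $\T[X']\simeq\T[X]\simeq\K_1$, and $X'\ne X$ since $m\notin X$. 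So the whole argument reduces to a combinatorial case analysis on where an inside scar can sit in $\K_1$.

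**The key structural observation.**

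Here I would exploit the concrete shape of $\K_1$ (Figure~\ref{fig:1cc}): its left tree is the caterpillar-free "balanced" binary tree on $4$ leaves — root attached to an internal vertex $a$ with two children, each of which is an internal vertex with two leaf-children — and likewise for the right tree. After suppressing degree-$2$ vertices, $L_{\T}[X]$ has exactly two inside edges (the two edges from $a$ to its two non-root children) and one outside edge (the root-edge). The crucial point is: \emph{every inside edge of $L_{\T}[X]$ is incident on the parent of a leaf.} Concretely, an inside edge of the $4$-leaf balanced tree joins $a$ to the parent of a cherry $\{\lambda_1,\lambda_2\}$; so that edge lies in the triple $G$ of edges at the parent of $\lambda_1$ (and also at the parent of $\lambda_2$). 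The same holds on the right tree. Hence if $m$ has an inside left-scar on edge $e$ and an inside right-scar on edge $f$, then $e$ is incident on the parent of some leaf $\mu_L$ matched by $X$, and $f$ is incident on the parent of some leaf $\mu_R$ matched by $X$.

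**Closing the argument.**

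If it happens that $\mu_L$ and $\mu_R$ are matched to \emph{the same} edge $m_1\in X$, we are done immediately by Lemma~\ref{lm:edgereplace}. The remaining worry is that $e$ forces $m_1 = \mu_L\mu_L'$ while $f$ forces a different $m_1' = \mu_R'\mu_R$. Here I would use that in $\K_1$ each inside edge on one side is actually incident on the parents of \emph{two} leaves (both ends of a cherry), so $e$ lies in $G$ for \emph{two} choices of $m_1$, and $f$ lies in $H$ for \emph{two} choices, and one checks from the explicit matching $\sigma_{\K_1}$ that these two pairs of edges always share a common edge of $X$ — this is the one place a short finite check (four leaves, specific matching) is unavoidable. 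Once a common $m_1$ is identified, Lemma~\ref{lm:edgereplace} gives $\T[(X\setminus\{m_1\})\cup\{m\}]\simeq\K_1$, so $(X\setminus\{m_1\})\cup\{m\}\in\XX_{\T}$ and differs from $X$, contradicting $\XX_{\T}=\{X\}$. Therefore $m$ cannot have two inside scars, i.e.\ it has at least one outside scar.

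**Main obstacle.** The only genuinely delicate step is verifying that the two "forced" edges of $X$ coming from the left inside edge $e$ and the two coming from the right inside edge $f$ always overlap; this is a small case analysis tied to the specific matching defining $\K_1$, and one should be careful to keep track of the distinction between the left-scar and right-scar of $m$ (which need not involve the same $m_1$ a priori). Everything else is a direct application of Lemmas~\ref{lm:consistent}, \ref{lm:leafreplace} and \ref{lm:edgereplace}.
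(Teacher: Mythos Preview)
Your approach is essentially the paper's: assume both scars of $m$ are inside, observe that every inside edge of $L_{\T}[X]$ (and of $R_{\T}[X]$) is incident on a parent-of-leaf vertex, and then swap out a suitable $m_1\in X$ via Lemma~\ref{lm:edgereplace} to get a second cross-responsible set.

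Two small remarks. First, your count of inside edges is off: $L_{\T}[X]$ has six inside edges (the two edges $a$--$x_i$ \emph{and} the four leaf-edges), not two; fortunately your ``crucial point'' remains true for leaf-edges as well, so the argument survives unchanged once you include them. Second, the finite overlap check you flag as the main obstacle is avoided in the paper by a cleaner formulation: if $v_1\in\{x_1,x_2\}$ is the parent-of-leaf vertex incident on the left scar-edge and $v_2\in\{x_3,x_4\}$ is the analogous vertex on the right, then the unique matching edge on the length-$3$ $v_1$--$v_2$ path in $\T[X]$ is automatically the common $m_1$ you are looking for, with no case analysis needed. (Lemmas~\ref{lm:consistent} and~\ref{lm:leafreplace} are not actually used here.)
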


\begin{figure}[http]
\centering 
\begin{tikzpicture}
\node[vertex] (lroot) at (-.5,0) {};
\node[vertex] (la1) at (.5,.5) {};
\node[vertex] (la2) at (.5,-.5) {};
\node[vertex] (lnew) at (-1,0) {};
\node[vertexb] (ll1) at (1, .75) {};	
\node[vertexb] (ll2) at (1, 0.25) {};		
\node[vertexb] (ll3) at (1, -0.25) {};		
\node[vertexb] (ll4) at (1, -.75) {};
\draw[thickedge] (la1)--(lroot)--(la2);
\draw[thickedge] (ll1)--(la1);
\draw[thickedge] (ll3)--(la2);
\draw[thickedge] (la2)--(ll4);
\draw[thickedge] (-1.25,0)--(lroot);
\node[vertex] (rroot) at (3.5,0) {};
\node[vertex] (ra1) at (2.5,.5) {};
\node[vertex] (ra2) at (2.5,-.5) {};
\node[vertex] (rnew) at (2.25,.625) {};
\node[vertexb] (rl1) at (2, .75) {};	
\node[vertexb] (rl2) at (2, 0.25) {};		
\node[vertexb] (rl3) at (2, -0.25) {};		
\node[vertexb] (rl4) at (2, -.75) {};		
\draw[thickedge] (ra1)--(rroot)--(ra2);
\draw[thickedge] (rl1)--(ra1)--(rl2);
\draw[thickedge] (ra2)--(rl4);
\draw[thickedge] (4.25,0)--(rroot);
\draw[thickedge] (ll1)--(rl1);
\draw[thickedge] (ll3)--(rl2);
\draw[thickedge] (ll4)--(rl4);
\node at (0,0.5) {$p$};
\node at (3,-0.5) {$q$};
\node at (.5,.7) {$v_1$};
\node at (2.5,-0.7) {$v_2$};
\draw[thickedge, dashed] (0,0.28)--(1,0)--(2,0)--(3,-0.28);
\node[vertex] (e) at (0.05,0.28) {};
\node[vertex] (f) at (2.95,-0.28) {};
\draw[thickedge, dotted] (la1)--(ll2)--(rl3)--(ra2);
\end{tikzpicture} 
\caption{Example for Lemma~\ref{lm:k1notinside}: adding the edge with scars at $p$ and $q$ (dashed line) and removing the matching edge on the length $3$ path connecting $v_1$ and $v_2$ results in a copy of $\K_1$.} 
\label{fig:k1notleaf}
\end{figure}
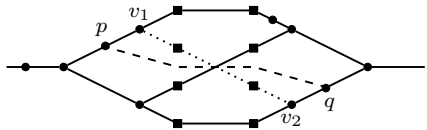 
 \begin{proof} As $\T[X]\simeq\K_1$, every  non-root edge of  $L_{\T[X]}$ or $R_{\T[X]}$ is incident upon an internal vertex with a leaf-neighbor.
Assume to the contrary that the left-scar of $m$ is on the edge $p$ of $L_{\T}[X]$ and the right scar is on the edge $q$ of $R_{\T}[X]$, and $p,q$ are not root-edges.
Let the vertices $v_1\in\II(L_{\T})$ and $v_2\in\II(R_{\T})$ be the internal vertices with a leaf-neighbor that are incident upon $p$ and $q$ respectively. There is a unique matching edge $r\in X$ on the shortest $v_1$-$v_2$ path in $\T[X]$. By Lemma~\ref{lm:edgereplace} 
$\T[(X\setminus\{r\})\cup\{m\}]\simeq\K_1$, which contradicts $\XX_{\T}=\{X\}$.
\end{proof}
 
 \begin{lemma}\label{lm:k1notleaf} Let $\T$ be a tanglegram with $\XX_{\T}=\{X\}$. Assume $\T[X]\simeq\K_1$ and let $m\in\sigma\setminus X$. 
Then none of the scars of $m$ is on a leaf-edge of  $\T[X]$.
 \end{lemma}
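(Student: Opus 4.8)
The plan is to argue by contradiction in the same spirit as the proof of Lemma~\ref{lm:k1notinside}: assume some scar of $m$ sits on a leaf-edge of $\T[X]$, and produce a second crossing-responsible set, contradicting $\XX_{\T}=\{X\}$. By Lemma~\ref{lm:k1notinside} we already know $m$ has at least one outside scar, so without loss of generality say the right-scar of $m$ is on the root-edge of $R_{\T}[X]$. Suppose, for contradiction, that the left-scar of $m$ lies on a leaf-edge of $L_{\T}[X]$, say the leaf-edge at $\lambda_1\in\LL(L_{\T[X]})$, and let $m_1\in X$ be the matching edge incident to $\lambda_1$. The idea is to swap $m_1$ out for $m$ and check what subtanglegram results.

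First I would set up notation carefully: $\K_1$ has a very rigid structure (both trees are the ``balanced'' binary tree on $4$ leaves, i.e. two cherries joined at the root), so I would name the internal vertices of $L_{\T[X]}$ and $R_{\T[X]}$ explicitly and record, for each leaf-edge, which two of the three edges at its parent are relevant for Lemma~\ref{lm:edgereplace}. The key observation is that $\lambda_1$ belongs to a cherry in $L_{\T[X]}$, with sibling $\lambda_1'$ incident to some $m_1'\in X$; the parent $v$ of $\lambda_1$ has the two leaf-edges at $\lambda_1,\lambda_1'$ and one edge going up toward the root. By Lemma~\ref{lm:edgereplace}, if the scar-type of $m$ on $\T[X]$ has its left-coordinate among these three edges at $v$, then $\T[(X\setminus\{m_1\})\cup\{m\}]\simeq\T[X]\simeq\K_1$ — and this isomorphism class is $\K_1$ regardless of where the right-scar of $m$ sits, because the right-scar being on the root-edge means $m$ ``plugs in'' on the right exactly where $\lambda_2$ (the right endpoint of $m_1$) sat, or adjacent to it. Actually the cleaner route: the left-scar of $m$ being on the leaf-edge at $\lambda_1$ means its left-coordinate is one of the three edges at $v$; combined with the right-scar being outside (on the root-edge of $R_{\T}[X]$, hence among the three edges at the parent of $\lambda_2'$ for an appropriate sibling), Lemma~\ref{lm:edgereplace} gives a new crossing-responsible set $(X\setminus\{m_1\})\cup\{m\}\ne X$ — provided we can also check this new set is genuinely different and still of size $4$, which is automatic since $m\notin X$.

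The main obstacle I anticipate is the bookkeeping around the right-scar: Lemma~\ref{lm:k1notinside} only tells us $m$ has \emph{some} outside scar, so there are two cases (left-scar outside, or right-scar outside), and in the case where the left-scar is outside but we are assuming the left-scar is also on a leaf-edge, we must rule out that a leaf-edge could itself be a root-edge — it cannot, since in $\K_1$ the root has degree $1$ and its unique neighbor is an internal vertex of degree $3$, so root-edges and leaf-edges are disjoint. The genuinely delicate point is checking that the scar-type of $m$ really does fall into the product set $G\times H$ of Lemma~\ref{lm:edgereplace}: the left-scar on the leaf-edge at $\lambda_1$ puts the left-coordinate in the edge set $G$ at the parent of $\lambda_1$; for the right side I would use the outside scar of $m$ together with Lemma~\ref{lm:k1notinside}, picking $m_1$ (the edge we remove) to be precisely the matching edge of $X$ whose left endpoint is $\lambda_1$, and then verifying that $m$'s right-scar — being on the root-edge — lies among the three edges at the parent of $m_1$'s right endpoint in $R_{\T[X]}$. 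Since in $\K_1$ the parent of \emph{any} leaf is adjacent to an edge leading ``toward the root,'' and the root-edge is the top edge of the tree, one checks this containment holds exactly when that parent is the (unique) internal vertex adjacent to the root; if $m_1$'s right endpoint happens to be in the ``other'' cherry, a symmetric argument swapping the roles of the two trees, or instead removing the partner edge $m_1'$, closes the remaining case. I would present this as a short case analysis (two or three cases) appealing to Lemma~\ref{lm:edgereplace} each time, exactly mirroring the picture in Figure~\ref{fig:k1notleaf}, where the dotted edge is $m$ and the removed edge lies on a short path between the relevant internal vertices.
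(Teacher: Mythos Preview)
Your plan has a genuine gap: Lemma~\ref{lm:edgereplace} cannot be invoked in the configuration you describe. In $\K_1$ each tree has the balanced shape: the unique neighbour of the root is an internal vertex whose two children are the two cherry-parents. Consequently the root-edge is \emph{never} among the three edges incident to the parent of a leaf. So if $m$ has (say) its right-scar on the root-edge of $R_{\T[X]}$, then for \emph{every} choice of $m_1\in X$ the set $H$ of edges at the parent of $m_1$'s right endpoint excludes the root-edge, and the hypothesis of Lemma~\ref{lm:edgereplace} fails. Your sentence ``one checks this containment holds exactly when that parent is the (unique) internal vertex adjacent to the root'' is precisely where the argument breaks down: that internal vertex has no leaf children in $\K_1$, so the case you need never occurs. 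The fallback of removing the cherry-sibling $m_1'$ does not help either; a direct computation shows that $(X\setminus\{m_1\})\cup\{m\}$ and $(X\setminus\{m_1'\})\cup\{m\}$ each induce a size-$4$ tanglegram whose two trees have different shapes (one balanced, one a caterpillar), hence neither $\K_1$ nor $\K_2$.

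The paper's proof takes a different and much shorter route. With the left-scar of $m$ outside and the right-scar on the leaf-edge at $\lambda_1$, let $p_1\in X$ be the matching edge at $\lambda_1$ and let $p_2\in X$ be the \emph{unsafe partner} of $p_1$ in $\T[X]$ (the unique edge of $X$ forming an unsafe pair with $p_1$). Then $(X\setminus\{p_2\})\cup\{m\}$ induces a copy of $\K_2$, not $\K_1$: removing $p_2$ collapses a cherry in each tree into a single leaf, and the outside left-scar together with the leaf-edge right-scar of $m$ turn both induced trees into caterpillars with the $\K_2$ matching pattern. So the key idea you are missing is that the second crossing-responsible set is obtained by deleting the \emph{diagonally opposite} edge and produces $\K_2$ rather than $\K_1$; this is not an application of Lemma~\ref{lm:edgereplace} at all.
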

  \begin{proof}
 By Lemma~\ref{lm:k1notinside} one of the scars of $m$ (without loss of generality the left-scar) is an outside scar. Assume for the contrary that
 the right-scar of $m$ is on a leaf-edge incident upon the leaf $\lambda_1$ and let $p_1\in X$ be the matching edge incident upon $\lambda_1$. Let $p_2\in X$ be the edge that forms an unsafe pair with $p_1$. Then $(X\setminus\{p_2\})\cup \{m\}$ induces a $\K_2$, a contradiction. 
 \begin{center}
  \begin{tikzpicture}
\node[vertex] (lroot) at (-.5,0) {};
\node[vertex] (la1) at (.5,.5) {};
\node[vertex] (la2) at (.5,-.5) {};
\node[vertex] (lnew) at (-1,0) {};
\node[vertexb] (ll5) at (1, 1.25) {};	
\node[vertexb] (ll1) at (1, .75) {};	
\node[vertexb] (ll2) at (1, 0.25) {};		
\node[vertexb] (ll3) at (1, -0.25) {};		
\node[vertexb] (ll4) at (1, -.75) {};		
\draw[thickedge] (la1)--(lroot)--(la2);
\draw[thickedge] (ll1)--(la1)--(ll2);
\draw[thickedge] (ll3)--(la2);
\draw[thickedge, dotted] (la2)--(ll4);
\draw[thickedge] (-1.25,0)--(lroot);
\node[vertex] (rroot) at (3.5,0) {};
\node[vertex] (ra1) at (2.5,.5) {};
\node[vertex] (ra2) at (2.5,-.5) {};
\node[vertex] (rnew) at (2.25,.625) {};
\node[vertexb] (rl5) at (2, 1.25) {};	
\node[vertexb] (rl1) at (2, .75) {};	
\node[vertexb] (rl2) at (2, 0.25) {};		
\node[vertexb] (rl3) at (2, -0.25) {};		
\node[vertexb] (rl4) at (2, -.75) {};		
\draw[thickedge] (ra1)--(rroot)--(ra2);
\draw[thickedge] (rl1)--(ra1)--(rl2);
\draw[thickedge] (rl3)--(ra2);
\draw[thickedge,dotted] (ra2)--(rl4);
\draw[thickedge] (4.25,0)--(rroot);
\draw[thickedge] (ll1)--(rl1);
\draw[thickedge] (ll2)--(rl3);
\draw[thickedge] (ll3)--(rl2);
\draw[thickedge, dotted] (ll4)--(rl4);
\node at (1.5,-.9) {$p_2$};
\node at (1.5,.9) {$p_1$};
\draw[thickedge,dashed] (lnew)--(ll5)--(rl5)--(rnew);
\node at (1.5,1.4) {$m$};
\end{tikzpicture}
\end{center}
 \end{proof}
 
 Now we are ready to prove the theorem in the case $\T[X]\simeq \K_1$.
 \begin{theorem}\label{th:k1} Let $\T$ be a tanglegram with $\XX_{\T}=\{X\}$. If $\T[X]\simeq \K_1$, then $\Crt(\T)=1$.
  \end{theorem}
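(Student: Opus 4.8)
The plan is to use Lemmas~\ref{lm:k1notinside} and~\ref{lm:k1notleaf} to pin down exactly where the scars of the ``extra'' matching edges can sit, and then to build an explicit layout of $\T$ with a single crossing. Fix the unique cross-responsible set $X$ with $\T[X]\simeq\K_1$. By the two lemmas, every $m\in\sigma_{\T}\setminus X$ has at least one outside scar, and neither of its scars lies on a leaf-edge of $\T[X]$; since $\K_1$ has exactly three non-root, non-leaf internal edges on each side --- in fact in $\K_1$ each side is a caterpillar with root-edge, two ``middle'' edges $e_1,e_2$ attached to the two internal cherries, and four leaf-edges --- the only admissible positions for a scar are the root-edge or the middle edges $e_1,e_2$. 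I would first observe that $L_{\T}\llbracket X\rrbracket$ (resp.\ $R_{\T}\llbracket X\rrbracket$) partitions $\LL(L_{\T})$ into blocks according to which representative path, and which vertex of it, a leaf's path to the root first meets; each block hangs off $L_{\T[X]}$ at a scar, and each block spans a contiguous interval of $\vec\ell_{L,\D}$ for any plane drawing by Lemma~\ref{lm:consistent}.

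The key structural step is to show that the scar positions are ``compatible'' on the two sides in a way that leaves enough freedom. Concretely: consider a leaf $\lambda_1$ of $L_\T$ matched by $m\notin X$ whose left-scar is an \emph{inside} scar on the middle edge $e_i$ of $L_{\T[X]}$. Lemma~\ref{lm:k1notinside} forces the right-scar of $m$ onto the root-edge of $R_{\T[X]}$. Dually, if the right-scar is inside, the left-scar is outside. So each extra edge is of one of three types: left-outside/right-inside, left-inside/right-outside, or left-outside/right-outside. I would further argue, using Lemma~\ref{lm:edgereplace} again, that the inside scars that actually occur are confined to a single one of $e_1,e_2$ on each side (if a left-inside scar occurred on $e_1$ and another on $e_2$, pairing either with a suitable partner edge would reproduce a $\K_1$ or $\K_2$ on a different edge-set) --- this is the bookkeeping that makes the final drawing possible, and I expect it to require a careful case analysis of which matching edge $r\in X$ lies on the relevant short path.

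With the scar structure in hand, I would construct the layout directly. Draw $\T[X]$ in its unique-crossing layout (the one crossing realizing the single unsafe pair of $\K_1$ whose crossing is unavoidable). Each block of leaves attached at the root-edge scar can be inserted just above or just below the whole $\K_1$-picture on the appropriate tree, drawn planarly internally (this is where one uses that the subtanglegram induced by a block together with the rest can be realized planarly --- the blocks attached at outside scars on both sides contribute no crossings because they can be routed entirely ``outside''). The blocks attached at an inside scar on one side have their partners attached at an \emph{outside} scar on the other side, so they can be routed around the outside of the other tree without creating new crossings; the contiguity from Lemma~\ref{lm:consistent} guarantees the insertions can be done without disturbing the rest of the order. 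Counting: no block-block or block-$X$ pair crosses, so $\Cr(\D)=\Cr(\D|_X)=1$, and $\Crt(\T)\ge 1$ since $\T\supseteq\K_1$; hence $\Crt(\T)=1$.

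The main obstacle I anticipate is the middle step: proving that the inside scars cannot ``spread'' to both middle edges on a side, and that an inside scar on one side being forced to an outside scar on the other side is genuinely enough to route without new crossings. In particular one must rule out the scenario where two extra edges have their inside scars on opposite middle edges of $L_{\T[X]}$ in a configuration that forces their $R$-side blocks to interleave badly; handling this cleanly will likely need one more application of Lemma~\ref{lm:edgereplace} (or a direct appeal to $\XX_{\T}=\{X\}$) to show such a configuration would itself manufacture a second cross-responsible set. Once that rigidity is established, assembling the one-crossing layout is routine.
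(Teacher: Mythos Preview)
Your proposal has a genuine gap, and it also departs from the paper's line of argument in a way that makes the proof harder rather than easier.

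First, a small but telling slip: the trees of $\K_1$ are \emph{not} caterpillars; each side is the balanced rooted binary tree on four leaves (root--edge, then two ``middle'' edges to the two cherry-parents, then four leaf-edges). There are two non-root, non-leaf edges per side, not three. This miscount propagates into your scar classification.

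More seriously, the step you yourself flag as ``the main obstacle'' --- that the inside scars on a given side are confined to a \emph{single} middle edge --- is neither proved nor, as far as I can see, true in general. Nothing in Lemmas~\ref{lm:k1notinside}, \ref{lm:k1notleaf}, or~\ref{lm:edgereplace} rules out one extra edge with left-scar on the middle edge toward $x_1$ and another with left-scar on the middle edge toward $x_2$, both with right-scar on the root edge; substituting such edges into $X$ typically yields a tanglegram whose two trees have different shapes (one balanced, one a caterpillar), hence neither $\K_1$ nor $\K_2$, so no contradiction with $\XX_{\T}=\{X\}$ arises. Without this confinement, your direct block-insertion scheme breaks down: a block hanging off one middle edge and a block hanging off the other must be placed on opposite sides of the $\K_1$ picture, and routing their partners (both on the right root-edge) without new crossings is not the triviality you suggest. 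Even granting confinement, you have not argued why blocks of the two ``mixed'' types (left-inside/right-outside versus left-outside/right-inside) do not interleave badly with each other.

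The paper sidesteps all of this with a much cleaner device: delete a single edge $e_1\in X$ and use $\XX_{\T}=\{X\}$ to get a \emph{planar} layout $\D'$ of $\T[\sigma_{\T}\setminus\{e_1\}]$; then observe (from the scar restrictions you already have) that in $\D'$ the three remaining leaves of $X$ on each side are consecutive, so $\lambda_1$ and $\lambda_5$ can be reinserted next to their cherry-mates, yielding a layout of $\T$ in which $e_1$ crosses only $e_3$. No confinement claim, no block-routing case analysis, is needed.
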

 
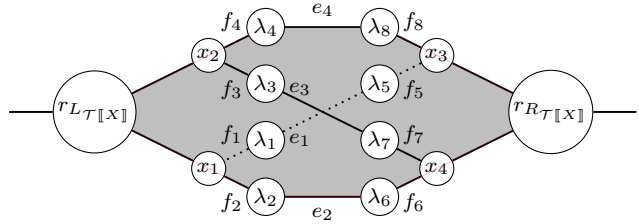
\begin{figure}[http]
\centering 
\begin{tikzpicture}[scale=1.5]
\fill[gray!50] (3.5,0)--(2,.75)--(1,.75)--(-.5,0)--(1,-.75)--(2,-.75)--(3.5,0);			
\node[vertexc] (lroot) at (-.5,0) {$r_{L_{\T\llbracket X\rrbracket}}$};
\node[vertexc] (la1) at (.5,.5) {$x_2$};
\node[vertexc] (la2) at (.5,-.5) {$x_1$};
\node[vertexc] (ll1) at (1, .75) {$\lambda_4$};	
\node[vertexc] (ll2) at (1, 0.25) {$\lambda_3$};		
\node[vertexc] (ll3) at (1, -0.25) {$\lambda_1$};		
\node[vertexc] (ll4) at (1, -.75) {$\lambda_2$};		
\draw[thickedge] (la1)--(lroot)--(la2);
\draw[thickedge] (ll1)--(la1)--(ll2);
\draw[thickedge,dotted] (ll3)--(la2);
\draw[thickedge] (la2)--(ll4);
\draw[thickedge] (-1.25,0)--(lroot);
\node[vertexc] (rroot) at (3.5,0) {$r_{R_{\T\llbracket X\rrbracket}}$};
\node[vertexc] (ra1) at (2.5,.5) {$x_3$};
\node[vertexc] (ra2) at (2.5,-.5) {$x_4$};
\node[vertexc] (rl1) at (2, .75) {$\lambda_8$};	
\node[vertexc] (rl2) at (2, 0.25) {$\lambda_5$};		
\node[vertexc] (rl3) at (2, -0.25) {$\lambda_7$};		
\node[vertexc] (rl4) at (2, -.75) {$\lambda_6$};		
\draw[thickedge] (ra1)--(rroot)--(ra2);
\draw[thickedge] (rl1)--(ra1);
\draw[thickedge,dotted] (ra1)--(rl2);
\draw[thickedge] (rl3)--(ra2);
\draw[thickedge] (ra2)--(rl4);
\draw[thickedge] (4.25,0)--(rroot);
\draw[thickedge] (ll1)--(rl1);
\draw[thickedge] (ll2)--(rl3);
\draw[thickedge,dotted] (ll3)--(rl2);
\draw[thickedge] (ll4)--(rl4);
\node at (1.5,.9) {$e_4$};
\node at (1.5,-.9) {$e_2$};
\node at (1.3,-.25) {$e_1$};
\node at (.7,-.2) {$f_1$};
\node at (.7,-.8) {$f_2$};
\node at (.7,.8) {$f_4$};
\node at (.7,.2) {$f_3$};
\node at (2.3,.2) {$f_5$};
\node at (2.3,.8) {$f_8$};
\node at (2.3,-.2) {$f_7$};
\node at (2.3,-.8) {$f_6$};
\node at (1.3,.2) {$e_3$};
\fill[red] (rroot)--(rl1)--(ll1)--(lroot)--(ll4)--(rl4)--(rroot);
\end{tikzpicture}
\caption{Illustration for Theorem~\ref{th:k1}.} 
\label{fig:thk1}
\end{figure}

 \begin{proof} 
 Since $\XX_{\T}\ne\emptyset$, $\Crt(\T)\ge 1$. To prove the theorem, it is sufficient to find a layout of $\T$ with only one crossing.
  
 Let $X=\{e_1,e_2,e_3,e_4\}$, where the indices are chosen  such that $e_1,e_3$ form an unsafe pair in $\T[X]$
 and the left leaves of $e_1$ and $e_2$ form a cherry in $\T[X]$. Choose the leaves 
 $\lambda_1,\lambda_2,\lambda_3,\lambda_4\in L_{\T}$ and $\lambda_5,\lambda_6,\lambda_7,\lambda_8\in R_{\T}$ such that $e_i=\lambda_{i}\lambda_{4+i}$. 
 Define $x_1=\lambda_1\land_{L_{\T}}\lambda_2$, $x_2=\lambda_3\land_{L_{\T}}\lambda_4$,
  $x_3=\lambda_5\land_{R_{\T}}\lambda_8$, and $x_4=\lambda_6\land_{R_{\T}}\lambda_7$. 
  For each $i$ with $1\le i\le 8$, let $f_i$ be the leaf-edge incident upon $\lambda_i$ in
  $\T[X]$, and $P_{f_i}$ the representative path connecting $\lambda_i$ to the appropriate $x_j$ in the (left- or right-) tree of $\T$ (see Definition~\ref{rep_path}).
  By Lemma~\ref{lm:k1notleaf}, $P_{f_i}=f_i$ (see Figure~\ref{fig:thk1}),  $\{\lambda_1,\lambda_2\},\{\lambda_3,\lambda_4\}$ are both cherries in
  $L_{\T}$ and $\{\lambda_5,\lambda_8\},
  \{\lambda_6,\lambda_7\}$ are both cherries in $R_{\T}$.
  
Consider $\T^{\prime}=\T[\sigma_{\T}\setminus\{e_1\}]$. Since $\XX_{\T^{\prime}}=\emptyset$,  Theorem~\ref{th:tkurat} yields $\Crt(\T^{\prime})=0$. 
Consider a planar layout $\D^{\prime}$ of $\T^{\prime}$ with representation $(\vec{\ell}_{L,\D^{\prime}},\vec{\ell}_{R,D^{\prime}})$. 
$\D^{\prime}$ (or its mirror image to the $x$ axis)
induces a planar sublayout of $\T[X\setminus\{e_1\}]$, where $e_2$ is below $e_4$; the only such layout is shown on Figure~\ref{fig:thk1}. From now on, this is the layout that we call $\D^{\prime}$. 


Consider an arbitrary matching edge $m\in\sigma_{\T}\setminus X$. By Lemma~\ref{lm:k1notinside}, $m$ has at least one 
outside scar in $\T[X]$. The other scar of $m$ cannot be in the interior of the finite domain 
which is bounded by edges $e_2,e_4$, the path $\lambda_2$-$r_{L_{\llbracket X\rrbracket}}$-$\lambda_4$ in $L_{\T^{\prime}}$ and
 the path $\lambda_6$-$r_{R_{\T\llbracket X\rrbracket}}$-$\lambda_8$ in $R_{\T^{\prime}}$, colored gray in Figure~\ref{fig:thk1}. 
 As the boundary of the gray domain
 is described by a simple closed curve, $m$ would intersect this boundary, contradicting the planarity of the drawing 
$\D^{\prime}$. Therefore
 we have that $\lambda_2,\lambda_3,\lambda_4$ is a contiguous subsequence of $\vec{\ell}_{L,\D^{\prime}}$, and 
 $\lambda_8,\lambda_7,\lambda_6$ is a contiguous subsequence of $\vec{\ell}_{R,\D^{\prime}}$. Furthermore,
 as $\D^{\prime}$ is planar, for each $\lambda_a\lambda_b\in\sigma_{\T^{\prime}},\lambda_c\lambda_d\in\sigma_{\T^{\prime}}$,
 where $\lambda_a,\lambda_c$ are leaves in the left tree and $\lambda_b,\lambda_d$ are leaves in the right tree,  
$\lambda_a<_{\vec{\ell}_{L,\D^{\prime}}} \lambda_c$ implies $\lambda_d<_{\vec{\ell}_{R,\D^{\prime}}} \lambda_b$. Now we are in the position to give the planar layout of $\T$ explicitly. We declare the representation of the layout as 
$(\vec{\ell}_L, \vec{\ell}_R)$, where $\vec{\ell}_L$ is obtained from $\vec{\ell}_{L,\D^{\prime}}$ by inserting $\lambda_1$ between $\lambda_2$ and $\lambda_3$, and $\vec{\ell}_R$ is obtained from $\vec{\ell}_{R,\D^{\prime}}$ by  inserting $\lambda_5$ between $\lambda_7$ and
 $\lambda_8$. 
 
It suffices  to check that $\vec{\ell}_L$ and $\vec{\ell}_R$ are 
 consistent leaf-orders of $L_{\T}$ and $R_{\T}$. We are going to 
 show the details for $\vec{\ell}_L$, as the other case is similar. 
   By  Lemma~\ref{lm:consistent}, we just need to show that the leaf-descendants of an 
 arbitrary $v\in\II(L_{\T})$ form a contiguous subsequence
 of $\vec{\ell}_{L}$. If $v=x_1$, the leaf-descendants are its children, $\lambda_1$ and $\lambda_2$, and the statement 
 follows from the construction of $\vec{\ell}_L$.

If $v\ne x_1$, then $v$ is an ancestor of $\lambda_1$ in $\T$ if and only if $v$ is an
 ancestor of $\lambda_2$. Furthermore, in this case   $v\in \II(L_{\T^{\prime}})$, as $x_1$ is the only  vertex of
$\II(L_{\T})$   that we lost in $\II(L_{\T^{\prime}})$.  The leaf-descendants of the children of $v$ in $L_{\T^{\prime}}$
 form a contiguous subsequence of $\vec{\ell}_{L,\D^{\prime}}$.  This contiguous subsequence includes $\lambda_2$ if and only if $\lambda_1$ is a descendant of $v$, and hence
 $\vec{\ell}_L$ is consistent.

 By (\ref{eq:crosslayout}) and the fact that   $\D^{\prime}$ is planar, to find out the crossing number of our layout boils down to counting the number of edges
 in  $\sigma_{\T}\setminus\{e_1\}$ that $e_1$ crosses.

If $m=\lambda_p\lambda_q\in\sigma_{\T}\setminus\{e_1,e_3\}$, then we must have one of the following:
either ($\lambda_p\le_{\vec{\ell}_{L,\D^{\prime}}}\lambda_2$ and $\lambda_6\le_{\vec{\ell}_{R,\D^{\prime}}}\lambda_q$) or
($\lambda_4\le_{\vec{\ell}_{L,\D^{\prime}}}\lambda_p$ and $\lambda_q\le_{\vec{\ell}_{R,\D^{\prime}}}\lambda_8$).
As $\lambda_2\le_{\vec{\ell}_L}\lambda_1\leq_{\vec{\ell}_L}\lambda_4$ and
$\lambda_8\le_{\vec{\ell}_R}\lambda_5\leq_{\vec{\ell}_R}\lambda_6$, we have that $e_1$ and $m$ do not cross in $\D$.
Therefore,
$\Cr(\D)\leq 1$, as $e_1$ still may cross $e_3$ (in fact, it does).
 \end{proof}  
 
\section{When the cross-responsible set induces \texorpdfstring{$\K_2$}{K2}}\label{sec:k2}

In this section we prove Theorem~\ref{th:main} when $\T[X]\simeq\K_2$, essentially using the  strategy used for $\K_1$  in the previous section.
As the only tanglegram automorphism of $\K_2$ is the identity, we will use a standardized labeling of the edge set of our induced $\K_2$ shown on Figure~\ref{fig:labels}. This labeling is needed  to refer to scar-types. The labeling uses a symmetry: the  unique proper graph automorphism of $\K_2$, which exchanges the right root with the left root (and therefore is not a tanglegram automorphism) fixes edges $x$ and $y$, and exchanges edges labeled by the same letter and different subscripts.

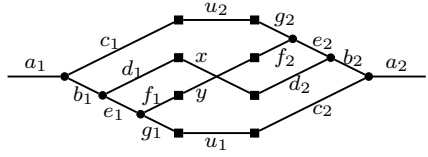
\begin{figure}[http]
\centering 
\begin{tikzpicture}
\node[vertex] (lroot) at (-.5,0) {};
\node[vertex] (la1) at (0,-.25) {};
\node[vertex] (la2) at (.5,-.5) {};
\node[vertexb] (ll1) at (1, .75) {};	
\node[vertexb] (ll2) at (1, 0.25) {};		
\node[vertexb] (ll3) at (1, -0.25) {};		
\node[vertexb] (ll4) at (1, -.75) {};		
\draw[thickedge] (ll1)--(lroot)--(la1);
\node at (.1,.45) {$c_1$};
\node at (-.25,.-.25) {$b_1$};
\draw[thickedge] (ll2)--(la1)--(la2);
\node at (.4,.1) {$d_1$};
\node at (.15,-.5) {$e_1$};
\draw[thickedge] (ll3)--(la2)--(ll4);
\node at (.65,-.75) {$g_1$};
\node at (.65,-.25) {$f_1$};
\draw[thickedge] (-1.25,0)--(lroot);
\node at (-.875,.15) {$a_1$};
\node[vertex] (rroot) at (3.5,0) {};
\node[vertex] (ra1) at (3,.25) {};
\node[vertex] (ra2) at (2.5,.5) {};
\node[vertexb] (rl1) at (2, -.75) {};	
\node[vertexb] (rl2) at (2, -0.25) {};		
\node[vertexb] (rl3) at (2, 0.25) {};		
\node[vertexb] (rl4) at (2, .75) {};		
\draw[thickedge] (rl1)--(rroot)--(ra1);
\node at (2.9,-.45) {$c_2$};
\node at (3.3,.25) {$b_2$};
\draw[thickedge] (rl2)--(ra1)--(ra2);
\node at (2.6,-.1) {$d_2$};
\node at (2.9,.45) {$e_2$};
\draw[thickedge] (rl3)--(ra2)--(rl4);
\node at (2.4,.75) {$g_2$};
\node at (2.4,.25) {$f_2$};
\draw[thickedge] (4.25,0)--(rroot);
\node at (3.875,.15) {$a_2$};
\draw[thickedge] (ll1)--(rl4);
\node at (1.5,.9) {$u_2$};
\draw[thickedge] (ll2)--(rl2);
\node at (1.3,.25) {$x$};
\draw[thickedge] (ll3)--(rl3);
\node at (1.3,-.25) {$y$};
\draw[thickedge] (ll4)--(rl1);
\node at (1.5,-.9) {$u_1$};
\end{tikzpicture}
\caption{Standardized edge-labeling of $\K_2$.} 
\label{fig:labels}
\end{figure}

 \begin{lemma}\label{lm:k2possible}
Let $\T$ be a tanglegram such that $\XX_{\T}=\{X\}$, $\T[X]\simeq\K_2$ and assume $m\in\sigma_{\T}\setminus X$. We have that:
\begin{itemize}
\item The scar type of $m$ in $\T[X]$ is one of the
following: \\
$(a_1,a_2)$, $(a_1,b_2)$, $(b_1,a_2)$, $(a_1,c_2)$, $(c_1,a_2)$, $(b_1,c_2)$, $(c_1,b_2)$, $(d_1,f_2)$, $(f_1,d_2)$.
\item The edges $e_1,e_2,g_1,g_2$ have no scars in $\T[X]$.
\item An edge $m\in\sigma_{\T}\setminus X$ has a scar on $d_j$ in $\T[X]$ precisely when it has a scar on $f_{3-j}$.
\end{itemize}
\end{lemma}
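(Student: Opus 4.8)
The plan is to establish the three bullets by the same method used in Lemmas~\ref{lm:k1notinside} and~\ref{lm:k1notleaf}: for each forbidden scar-type, exhibit a matching edge $r\in X$ such that $(X\setminus\{r\})\cup\{m\}$ again induces $\K_1$ or $\K_2$, contradicting $\XX_{\T}=\{X\}$, using Lemma~\ref{lm:edgereplace} to recognize when the swap preserves the isomorphism type. So the first step is bookkeeping: list the $7\times 7=49$ candidate scar-types $(e,f)$ with $e$ an edge of $L_{\T}[X]\simeq L_{\K_2}$ and $f$ an edge of $R_{\T}[X]\simeq R_{\K_2}$ (there are $7$ non-root-edges plus the root-edge in each tree, but by Lemma~\ref{lm:edgereplace} a scar-type in $G\times H$ — the product of the three edges at the parent of a leaf of $X$ — is immediately lethal, so many are eliminated in one stroke). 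Concretely, for the first bullet I would go through each pair $(e,f)$ not on the allowed list and name the edge of $X$ to delete: for instance if the left-scar is on $e_1$, then $m$ together with $u_1$ and the two leaves above $e_1$ forms, after deleting a suitable edge, a copy of $\K_1$ or $\K_2$; the key observation is that $e_1,g_1$ (and $e_2,g_2$) are leaf-edges whose endpoints form cherries, so any scar there lies in the $G\times H$ set of Lemma~\ref{lm:edgereplace} for the corresponding edge, giving the second bullet essentially for free and simultaneously knocking out every scar-type with a coordinate in $\{e_1,e_2,g_1,g_2\}$.

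For the surviving internal edges $b_j,c_j,d_j,f_j$ and the root-edges $a_j$, I would organize the case analysis by which of the two trees receives an "outside" (root-edge) scar. The point is that $\K_2$, unlike $\K_1$, is not symmetric, and the standardized labeling of Figure~\ref{fig:labels} lets me track exactly which deletion works. The cases $(a_1,a_2),(a_1,b_2),(b_1,a_2),(a_1,c_2),(c_1,a_2),(b_1,c_2),(c_1,b_2)$ should survive because in each of them the edge $m$ attaches "outside" the critical part of at least one tree and adding it without removing anything from the relevant cherry-region cannot create a new cross-responsible set — but one must check that no third copy of $\K_1$ or $\K_2$ is created, which is where the labeling symmetry (the graph automorphism fixing $x,y$ and swapping subscripts) cuts the work roughly in half. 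The cases $(d_1,f_2)$ and $(f_1,d_2)$ are the genuinely interesting survivors and also the content of the third bullet: I claim a scar on $d_j$ forces a scar on $f_{3-j}$, because if instead the right-scar were on some other edge $f'$, then replacing the matching edge $r\in X$ lying on the shortest path between the internal vertex carrying the $d_j$-scar and the internal vertex carrying the $f'$-scar yields a copy of $\K_1$ or $\K_2$ via Lemma~\ref{lm:edgereplace}; the delicate point is verifying that the specific combination $d_j$ with $f_{3-j}$ does \emph{not} admit such a replacement, i.e.\ checking that every matching edge of $X$ on the relevant path, when removed, breaks $\T[X]$ rather than producing a Kuratowski tanglegram — equivalently that the new leaf set after the swap induces a tree isomorphic to $L_{\K_2}$ or $R_{\K_2}$ but the whole tanglegram is planar.

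The main obstacle I anticipate is precisely this last verification: showing that $(d_1,f_2)$ and $(f_1,d_2)$ are \emph{permitted} requires arguing that inserting such an $m$ creates no new cross-responsible set, which is a positive (not forbidden-configuration) statement and so cannot be dispatched by a single edge-swap; one likely needs to observe that the induced subdrawing $\T[(X\setminus\{r\})\cup\{m\}]$ for every $r\in X$ is either disconnected-in-the-wrong-way or planar, by inspecting the five relevant choices of $r$ against Figure~\ref{fig:labels}. A clean way to shortcut the $49$-case bookkeeping is to first prove, as in Lemma~\ref{lm:k1notinside}, that at least one scar of $m$ must be "outside" \emph{unless} the pair of scars is exactly $(d_j,f_{3-j})$: if both scars are inside and not of that form, the shortest-path argument produces a forbidden replacement. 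That reduces the problem to (i) the outside-outside and outside-inside cases, finitely many and handled by Lemma~\ref{lm:edgereplace}, plus (ii) the single inside-inside exception, handled by direct inspection — and the three bullets then follow by assembling these observations, with the leaf-edge claim (second bullet) already subsumed.
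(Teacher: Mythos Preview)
Your core plan --- exhaust the $7\times 7$ scar-types, kill most via the four $G\times H$ blocks of Lemma~\ref{lm:edgereplace}, and handle the leftovers by explicit swaps --- is exactly the paper's approach. But three of your auxiliary claims are wrong and would derail the execution. First, the lemma states only a \emph{necessary} condition on scar-types; you never need to argue that the nine listed types are ``permitted'' in the sense of failing to create a second cross-responsible set, and indeed whether they do can depend on the rest of $\T$. Drop that entire portion. Second, your proposed shortcut (``at least one scar is outside unless the pair is $(d_j,f_{3-j})$'') is false: $(b_1,c_2)$ and $(c_1,b_2)$ are on the allowed list and have no root-edge coordinate. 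Third, the second bullet is not quite free from the $G\times H$ blocks: $e_1$ and $e_2$ each lie in three of the four left/right edge-sets and so are fully covered, but $g_1$ lies only in the sets for $u_1$ and $y$, leaving $(g_1,d_2)$ (and symmetrically $(d_1,g_2)$) to be eliminated separately.

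After the four $G\times H$ blocks, exactly seven forbidden pairs survive: $(c_1,c_2)$, $(d_1,a_2)$, $(a_1,d_2)$, $(d_1,c_2)$, $(c_1,d_2)$, $(d_1,g_2)$, $(g_1,d_2)$. Here Lemma~\ref{lm:edgereplace} no longer helps, since it only certifies that a swap \emph{preserves} the isomorphism type, while in several of these cases the new cross-responsible set is a $\K_1$, not a $\K_2$. The paper handles them by naming, for each, an explicit three-element subset $Y\subset X$ and checking by direct inspection of the induced trees that $Y\cup\{m\}$ is cross-responsible: $(c_1,c_2)$ gives $\{u_1,u_2,y,m\}\simeq\K_1$; $(d_1,a_2)$ gives $\{x,u_1,u_2,m\}\simeq\K_2$; $(d_1,c_2)$ gives $\{x,y,u_1,m\}\simeq\K_1$; $(d_1,g_2)$ gives $\{x,y,u_2,m\}\simeq\K_2$; the remaining three follow by the left--right graph automorphism of $\K_2$. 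Your shortest-path heuristic does not reliably produce the correct $r$ in these cases, so plan on the explicit check rather than a uniform rule.
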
 
 
 \begin{proof}
 Scars of $m$ will refer to scars in $\T[X]$.
 Assume that $m\in\sigma\setminus X$ has at least one scar in $\T[X]$ on an inside edge.
By Lemma~\ref{lm:edgereplace} 
the scar-type of $m$ in $\T[X]$ can not be in any of the sets
$\{a_1,b_1,c_1\}\times\{e_{2},f_{2},g_{2}\}$, $\{e_1,  f_1, g_1\}\times\{a_2,b_2,c_2\}$,
$\{b_1,d_1,e_1\}\times\{b_2,d_2,e_2\}$,
$\{e_1,f_1,g_1\}\times\{e_2,f_2,g_2\}$, otherwise we find a second cross-responsible set.

 Therefore to prove the lemma, we only need to show that $m$ can not have scar type $(c_1,c_2)$, and if
 one of the scars is on $d_i$, then the other cannot be on either of $a_{3-i}$, $c_{3-i}$ or $g_{3-i}$. 
 Notice that 
\begin{itemize}
\item if the scar-type of $m$ were $(c_1,c_2)$, then $\{u_1,u_2,y,m\}$ would induce a copy of $\K_1$:
\begin{center}
\begin{tikzpicture}
\node[vertex] (lroot) at (-.5,0) {};
\node[vertex] (la1) at (0,-.25) {};
\node[vertex] (la2) at (.5,-.5) {};
\node[vertexb] (ll1) at (1, .75) {};	
\node[vertexb] (ll2) at (1, 0.25) {};		
\node[vertexb] (ll3) at (1, -0.25) {};		
\node[vertexb] (ll4) at (1, -.75) {};
\node[vertexd] (ll5) at (1, .5) {};		
\draw[thickedge] (ll1)--(lroot)--(la1);
\node at (.1,.45) {$c_1$};
\node at (-.25,.-.25) {$b_1$};
\draw[thickedge] (ll2)--(la1)--(la2);
\node at (.4,.1) {$d_1$};
\node[vertexc] (la3) at (.75,0.635) {};
\node at (.15,-.5) {$e_1$};
\draw[thickedge] (ll3)--(la2)--(ll4);
\node at (.65,-.75) {$g_1$};
\node at (.65,-.25) {$f_1$};
\draw[thickedge] (-1.25,0)--(lroot);
\node at (-.875,.15) {$a_1$};
\node[vertex] (rroot) at (3.5,0) {};
\node[vertex] (ra1) at (3,.25) {};
\node[vertex] (ra2) at (2.5,.5) {};
\node[vertexb] (rl1) at (2, -.75) {};	
\node[vertexb] (rl2) at (2, -0.25) {};		
\node[vertexb] (rl3) at (2, 0.25) {};		
\node[vertexb] (rl4) at (2, .75) {};
\node[vertexd] (rl5) at (2, -.5) {};		
\draw[thickedge] (rl1)--(rroot)--(ra1);
\node at (2.9,-.45) {$c_2$};
\node[vertexc] (ra3) at (2.25, -.625) {};	
\node at (3.3,.25) {$b_2$};
\draw[thickedge] (rl2)--(ra1)--(ra2);
\node at (2.6,-.1) {$d_2$};
\node at (2.9,.45) {$e_2$};
\draw[thickedge] (rl3)--(ra2)--(rl4);
\node at (2.4,.75) {$g_2$};
\node at (2.4,.25) {$f_2$};
\draw[thickedge] (4.25,0)--(rroot);
\node at (3.875,.15) {$a_2$};
\draw[thickedge,dotted] (la3)--(ll5)--(rl5)--(ra3);
\node at (1.3,.35) {$m$};
\draw[thickedge] (ll1)--(rl4);
\node at (1.5,.9) {$u_2$};
\draw[thickedge] (ll2)--(rl2);
\node at (1.025,.05) {$x$};
\draw[thickedge] (ll3)--(rl3);
\node at (1.3,-.25) {$y$};
\draw[thickedge] (ll4)--(rl1);
\node at (1.5,-.9) {$u_1$};
\draw[->,line width=5pt] (5,0)--(6,0);
\node[vertex] (Lroot) at (7.5,0) {};
\node[vertex] (La1) at (8.5,.5) {};
\node[vertex] (La2) at (8.5,-.5) {};
\node[vertexb] (Ll1) at (9, .75) {};	
\node[vertexb] (Ll2) at (9, 0.25) {};		
\node[vertexb] (Ll3) at (9, -0.25) {};		
\node[vertexb] (Ll4) at (9, -.75) {};	
\draw[thickedge] (La1)--(Lroot)--(La2);
\node[vertexc] at (8,-.25) {};
\draw [decorate,
	decoration = {calligraphic brace}] (7.51,.1) --  (8.99,.85);
\node at (8.1,.65) {$c_1$};
\draw[thickedge] (Ll1)--(La1)--(Ll2);
\node at (8.15,-.5) {$e_1$};
\node at (7.75,.-.25) {$b_1$};
\draw[thickedge] (Ll3)--(La2)--(Ll4);
\node at (8.65,-.75) {$g_1$};
\node at (8.65,-.25) {$f_1$};
\draw[thickedge] (6.75,0)--(Lroot);
\node at (7.125,.15) {$a_1$};
\node[vertex] (Rroot) at (11.5,0) {};
\node[vertex] (Ra1) at (10.5,-.5) {};
\node[vertex] (Ra2) at (10.5,.5) {};
\node[vertexb] (Rl1) at (10, -.75) {};	
\node[vertexb] (Rl2) at (10, -0.25) {};		
\node[vertexb] (Rl3) at (10, 0.25) {};		
\node[vertexb] (Rl4) at (10, .75) {};
\draw [decorate,
	decoration = {calligraphic brace}] (11.49,-.1) --  (9.99,-.85);
\draw[thickedge] (Ra2)--(Rroot)--(Ra1);
\node at (10.9,-.65) {$c_2$};
\draw[thickedge] (Rl2)--(Ra1)--(Rl1);
\draw[thickedge] (Rl3)--(Ra2)--(Rl4);
\node at (10.4,.75) {$g_2$};
\node at (11.3,.25) {$b_2$};
\node at (10.9,.45) {$e_2$};
\node[vertexc] at (11,.25) {};
\draw[thickedge] (12.25,0)--(Rroot);
\node at (11.875,.15) {$a_2$};
\draw[thickedge] (Ll1)--(Rl4);
\node at (9.5,.9) {$u_2$};
\draw[thickedge] (Ll2)--(Rl2);
\node at (9.3,.25) {$m$};
\draw[thickedge] (Ll3)--(Rl3);
\node at (9.3,-.25) {$y$};
\draw[thickedge] (Ll4)--(Rl1);
\node at (9.5,-.9) {$u_1$};
\end{tikzpicture}
\end{center}
\item if the scars of $m$ were $d_i$ and $a_{3-i}$ for some $i\in\{1,2\}$, then $\{x,u_i,u_{3-i},m\}$ would induce a copy of $\K_2$:
\begin{center}
\begin{tikzpicture} 
\node[vertex] (lroot) at (-.5,0) {};
\node[vertex] (la1) at (0,-.25) {};
\node[vertex] (la2) at (.5,-.5) {};
\node[vertexb] (ll1) at (1, .75) {};	
\node[vertexb] (ll2) at (1, 0.25) {};		
\node[vertexb] (ll3) at (1, -0.25) {};		
\node[vertexb] (ll4) at (1, -.75) {};
\node[vertexd] (ll5) at (1, 0) {};		
\draw[thickedge] (ll1)--(lroot)--(la1);
\node at (.1,.45) {$c_1$};
\node at (-.25,.-.25) {$b_1$};
\draw[thickedge] (ll2)--(la1)--(la2);
\node at (.4,.1) {$d_1$};
\node[vertexc] (la3) at (.75,0.125) {};
\node at (.15,-.5) {$e_1$};
\draw[thickedge] (ll3)--(la2)--(ll4);
\node at (.65,-.75) {$g_1$};
\node at (.65,-.25) {$f_1$};
\draw[thickedge] (-1.25,0)--(lroot);
\node at (-.875,.15) {$a_1$};
\node[vertex] (rroot) at (3.5,0) {};
\node[vertex] (ra1) at (3,.25) {};
\node[vertex] (ra2) at (2.5,.5) {};
\node[vertexb] (rl1) at (2, -.75) {};	
\node[vertexb] (rl2) at (2, -0.25) {};		
\node[vertexb] (rl3) at (2, 0.25) {};		
\node[vertexb] (rl4) at (2, .75) {};
\node[vertexd] (rl5) at (2, -1) {};		
\draw[thickedge] (rl1)--(rroot)--(ra1);
\node at (2.9,-.45) {$c_2$};
\node at (3.3,.25) {$b_2$};
\draw[thickedge] (rl2)--(ra1)--(ra2);
\node at (2.6,-.1) {$d_2$};
\node at (2.9,.45) {$e_2$};
\draw[thickedge] (rl3)--(ra2)--(rl4);
\node at (2.4,.75) {$g_2$};
\node at (2.4,.25) {$f_2$};
\draw[thickedge] (4.25,0)--(rroot);
\node[vertexc] (ra3) at (4, 0) {};	
\node (rre) at (3.875,.3) {$a_2$};
\draw[->] (rre)--(3.75,.05);
\draw[->] (rre)--(4.15,.05);
\draw[thickedge,dotted] (la3)--(ll5)--(rl5)--(ra3);
\node at (1.7,-.5) {$m$};
\draw[thickedge] (ll1)--(rl4);
\node at (1.5,.9) {$u_2$};
\draw[thickedge] (ll2)--(rl2);
\node at (1.3,.25) {$x$};
\draw[thickedge] (ll3)--(rl3);
\node at (1.3,-.25) {$y$};
\draw[thickedge] (ll4)--(rl1);
\node at (1.5,-.9) {$u_1$};
\draw[->,line width=5pt] (5,0)--(6,0);
\node[vertex] (Lroot) at (7.5,0) {};
\node[vertex] (La1) at (8,-.25) {};
\node[vertex] (La2) at (8.5,-.5) {};
\node[vertexb] (Ll1) at (9, .75) {};	
\node[vertexb] (Ll2) at (9, 0.25) {};		
\node[vertexb] (Ll3) at (9, -0.25) {};		
\node[vertexb] (Ll4) at (9, -.75) {};	
\draw[thickedge] (Ll1)--(Lroot)--(La1);
\node at (8.1,.45) {$c_1$};
\node at (7.75,.-.25) {$b_1$};
\draw[thickedge] (Ll2)--(La1)--(La2);
\node (newd1) at (8.75,-.125) {$d_1$};
\draw[->] (newd1)--(8.375,-.35);
\draw[->] (newd1)--(8.65,-.35);
\node[vertexc] (La3) at (8.5,0) {};
\node at (8.15,0) {$e_1$};
\draw[thickedge] (Ll3)--(La2)--(Ll4);
\node at (8.65,.25) {$g_1$};
\draw[thickedge] (6.75,0)--(Lroot);
\node at (7.125,.15) {$a_1$};
\node[vertex] (Rroot) at (11.5,0) {};
\node[vertex] (Ra1) at (11,.25) {};
\node[vertex] (Ra2) at (10.5,.5) {};
\node[vertexb] (Rl1) at (10, -.75) {};	
\node[vertexb] (Rl2) at (10, -0.25) {};		
\node[vertexb] (Rl3) at (10, 0.25) {};		
\node[vertexb] (Rl4) at (10, .75) {};	
\draw[thickedge] (Rl1)--(Rroot)--(Ra1);
\draw[thickedge] (Rl2)--(Ra1)--(Ra2);
\node at (10.6,-.1) {$c_2$};
\node at (10.9,.45) {$b_2$};
\draw[thickedge] (Rl3)--(Ra2)--(Rl4);
\node at (10.1,.55) {$g_2$};
\node at (10.5,.65) {$e_2$};
\node at (10.4,.25) {$d_2$};
\draw[thickedge] (12.25,0)--(Rroot);
\node[vertexc] (Ra3) at (10.25, .625) {};	
\node (rre) at (11.875,.3) {$a_2$};
\draw[->] (rre)--(11.3,.15);
\draw[->] (rre)--(11.875,.05);
\draw[thickedge] (Ll1)--(Rl4);
\node at (9.5,.9) {$u_2$};
\draw[thickedge] (Ll2)--(Rl2);
\node at (9.3,.25) {$u_1$};
\draw[thickedge] (Ll3)--(Rl3);
\node at (9.3,-.25) {$x$};
\draw[thickedge] (Ll4)--(Rl1);
\node at (9.5,-.9) {$m$};
\end{tikzpicture}
\end{center}
\item if the scars of $m$ were $d_i$ and $c_{3-i}$ for some $i\in\{1,2\}$, then $\{x,y,u_i,m\}$ would induce a copy of $\K_1$:
\begin{center}
\begin{tikzpicture} 
\node[vertex] (lroot) at (-.5,0) {};
\node[vertex] (la1) at (0,-.25) {};
\node[vertex] (la2) at (.5,-.5) {};
\node[vertexb] (ll1) at (1, .75) {};	
\node[vertexb] (ll2) at (1, 0.25) {};		
\node[vertexb] (ll3) at (1, -0.25) {};		
\node[vertexb] (ll4) at (1, -.75) {};
\node[vertexd] (ll5) at (1, 0) {};		
\draw[thickedge] (ll1)--(lroot)--(la1);
\node at (.1,.45) {$c_1$};
\node at (-.25,.-.25) {$b_1$};
\draw[thickedge] (ll2)--(la1)--(la2);
\node at (.4,.1) {$d_1$};
\node[vertexc] (la3) at (.75,0.125) {};
\node at (.15,-.5) {$e_1$};
\draw[thickedge] (ll3)--(la2)--(ll4);
\node at (.65,-.75) {$g_1$};
\node at (.65,-.25) {$f_1$};
\draw[thickedge] (-1.25,0)--(lroot);
\node at (-.875,.15) {$a_1$};
\node[vertex] (rroot) at (3.5,0) {};
\node[vertex] (ra1) at (3,.25) {};
\node[vertex] (ra2) at (2.5,.5) {};
\node[vertexb] (rl1) at (2, -.75) {};	
\node[vertexb] (rl2) at (2, -0.25) {};		
\node[vertexb] (rl3) at (2, 0.25) {};		
\node[vertexb] (rl4) at (2, .75) {};
\node[vertexd] (rl5) at (2, -.5) {};		
\draw[thickedge] (rl1)--(rroot)--(ra1);
\node at (2.9,-.45) {$c_2$};
\node[vertexc] (ra3) at (2.25, -.625) {};	
\node at (3.3,.25) {$b_2$};
\draw[thickedge] (rl2)--(ra1)--(ra2);
\node at (2.6,-.1) {$d_2$};
\node at (2.9,.45) {$e_2$};
\draw[thickedge] (rl3)--(ra2)--(rl4);
\node at (2.4,.75) {$g_2$};
\node at (2.4,.25) {$f_2$};
\draw[thickedge] (4.25,0)--(rroot);
\node at (3.875,.15) {$a_2$};
\draw[thickedge,dotted] (la3)--(ll5)--(rl5)--(ra3);
\node at (1.7,-.5) {$m$};
\draw[thickedge] (ll1)--(rl4);
\node at (1.5,.9) {$u_2$};
\draw[thickedge] (ll2)--(rl2);
\node at (1.3,.25) {$x$};
\draw[thickedge] (ll3)--(rl3);
\node at (1.3,-.25) {$y$};
\draw[thickedge] (ll4)--(rl1);
\node at (1.5,-.9) {$u_1$};
\draw[->,line width=5pt] (5,0)--(6,0);
\node[vertex] (Lroot) at (7.5,0) {};
\node[vertex] (La1) at (8.5,.5) {};
\node[vertex] (La2) at (8.5,-.5) {};
\node[vertexb] (Ll1) at (9, .75) {};	
\node[vertexb] (Ll2) at (9, 0.25) {};		
\node[vertexb] (Ll3) at (9, -0.25) {};		
\node[vertexb] (Ll4) at (9, -.75) {};	
\draw[thickedge] (La1)--(Lroot)--(La2);
\draw [decorate,
	decoration = {calligraphic brace}] (7.51,.1) --  (8.99,.85);
\node at (8.1,.65) {$d_1$};
\draw[thickedge] (Ll1)--(La1)--(Ll2);
\node at (8,-.45) {$e_1$};
\draw[thickedge] (Ll3)--(La2)--(Ll4);
\node at (8.65,-.75) {$g_1$};
\node at (8.65,-.25) {$f_1$};
\draw[thickedge] (6.75,0)--(Lroot);
\node[vertexc] at (7.125,0) {};
\node at (7.375,.15) {$b_1$};
\node at (6.875,.15) {$a_1$};
\node[vertex] (Rroot) at (11.5,0) {};
\node[vertex] (Ra1) at (10.5,-.5) {};
\node[vertex] (Ra2) at (10.5,.5) {};
\node[vertexb] (Rl1) at (10, -.75) {};	
\node[vertexb] (Rl2) at (10, -0.25) {};		
\node[vertexb] (Rl3) at (10, 0.25) {};		
\node[vertexb] (Rl4) at (10, .75) {};
\draw [decorate,
	decoration = {calligraphic brace}] (11.49,-.1) --  (9.99,-.85);
\draw[thickedge] (Ra2)--(Rroot)--(Ra1);
\node at (10.9,-.65) {$c_2$};
\node at (11,.45) {$b_2$};
\draw[thickedge] (Rl2)--(Ra1)--(Rl1);
\draw[thickedge] (Rl3)--(Ra2)--(Rl4);
\node at (10.4,.75) {$d_2$};
\node at (10.325,.155) {$f_2$};
\node[vertexc] at (10.25,.375) {};
\node at (10.5, .35) {$e_2$};
\draw[thickedge] (12.25,0)--(Rroot);
\node at (11.875,.15) {$a_2$};
\draw[thickedge] (Ll1)--(Rl4);
\node at (9.5,.9) {$x$};
\draw[thickedge] (Ll2)--(Rl2);
\node at (9.3,.25) {$m$};
\draw[thickedge] (Ll3)--(Rl3);
\node at (9.3,-.25) {$y$};
\draw[thickedge] (Ll4)--(Rl1);
\node at (9.5,-.9) {$u_1$};
\end{tikzpicture}
\end{center}
\item if the scars of $m$ were $d_i$ and $g_{3-i}$ for some $i\in\{1,2\}$, then $\{x,y,u_{3-i},m\}$ would induce a copy of $\K_2$:
\begin{center}
\begin{tikzpicture} 
\node[vertex] (lroot) at (-.5,0) {};
\node[vertex] (la1) at (0,-.25) {};
\node[vertex] (la2) at (.5,-.5) {};
\node[vertexb] (ll1) at (1, .75) {};	
\node[vertexb] (ll2) at (1, 0.25) {};		
\node[vertexb] (ll3) at (1, -0.25) {};		
\node[vertexb] (ll4) at (1, -.75) {};
\node[vertexd] (ll5) at (1, .5) {};		
\draw[thickedge] (ll1)--(lroot)--(la1);
\node at (.1,.45) {$c_1$};
\node at (-.25,.-.25) {$b_1$};
\draw[thickedge] (ll2)--(la1)--(la2);
\node at (.4,.1) {$d_1$};
\node[vertexc] (la3) at (.75,0.125) {};
\node at (.15,-.5) {$e_1$};
\draw[thickedge] (ll3)--(la2)--(ll4);
\node at (.65,-.75) {$g_1$};
\node at (.65,-.25) {$f_1$};
\draw[thickedge] (-1.25,0)--(lroot);
\node at (-.875,.15) {$a_1$};
\node[vertex] (rroot) at (3.5,0) {};
\node[vertex] (ra1) at (3,.25) {};
\node[vertex] (ra2) at (2.5,.5) {};
\node[vertexb] (rl1) at (2, -.75) {};	
\node[vertexb] (rl2) at (2, -0.25) {};		
\node[vertexb] (rl3) at (2, 0.25) {};		
\node[vertexb] (rl4) at (2, .75) {};
\node[vertexd] (rl5) at (2, .5) {};		
\draw[thickedge] (rl1)--(rroot)--(ra1);
\node at (2.9,-.45) {$c_2$};
\node at (3.3,.25) {$b_2$};
\draw[thickedge] (rl2)--(ra1)--(ra2);
\node at (2.6,-.1) {$d_2$};
\node at (2.9,.45) {$e_2$};
\draw[thickedge] (rl3)--(ra2)--(rl4);
\node at (2.4,.75) {$g_2$};
\node at (2.4,.25) {$f_2$};
\draw[thickedge] (4.25,0)--(rroot);
\node[vertexc] (ra3) at (2.25, .625) {};	
\node at (3.875,.15) {$a_2$};
\draw[thickedge,dotted] (la3)--(ll5)--(rl5)--(ra3);
\node at (1.5,.635) {$m$};
\draw[thickedge] (ll1)--(rl4);
\node at (1.5,.9) {$u_2$};
\draw[thickedge] (ll2)--(rl2);
\node at (1.3,.25) {$x$};
\draw[thickedge] (ll3)--(rl3);
\node at (1.3,-.25) {$y$};
\draw[thickedge] (ll4)--(rl1);
\node at (1.5,-.9) {$u_1$};
\draw[->,line width=5pt] (5,0)--(6,0);
\node[vertex] (Lroot) at (7.5,0) {};
\node[vertex] (La1) at (8,-.25) {};
\node[vertex] (La2) at (8.5,-.5) {};
\node[vertexb] (Ll1) at (9, .75) {};	
\node[vertexb] (Ll2) at (9, 0.25) {};		
\node[vertexb] (Ll3) at (9, -0.25) {};		
\node[vertexb] (Ll4) at (9, -.75) {};	
\draw[thickedge] (Ll1)--(Lroot)--(La1);
\node at (8.1,.45) {$c_1$};
\node at (7.75,.-.25) {$b_1$};
\draw[thickedge] (Ll2)--(La1)--(La2);
\draw [decorate,
	decoration = {calligraphic brace}] (8.99,-.85) --  (8.01,-.35);
\node at (8.4,-.75) {$d_1$};
\node[vertexc] (La3) at (8.5,0) {};
\node at (8.15,0) {$e_1$};
\draw[thickedge] (Ll3)--(La2)--(Ll4);
\node at (8.65,.25) {$f_1$};
\draw[thickedge] (6.75,0)--(Lroot);
\node at (7.125,.15) {$a_1$};
\node[vertex] (Rroot) at (11.5,0) {};
\node[vertex] (Ra1) at (11,.25) {};
\node[vertex] (Ra2) at (10.5,.5) {};
\node[vertexb] (Rl1) at (10, -.75) {};	
\node[vertexb] (Rl2) at (10, -0.25) {};		
\node[vertexb] (Rl3) at (10, 0.25) {};		
\node[vertexb] (Rl4) at (10, .75) {};		
\draw[thickedge] (Rl1)--(Rroot)--(Ra1);
\node at (10.9,-.45) {$d_2$};
\node at (11.3,.25) {$e_2$};
\draw[thickedge] (Rl2)--(Ra1)--(Ra2);
\node at (10.6,-.1) {$f_2$}; 
\draw[thickedge] (Rl3)--(Ra2)--(Rl4);
\draw [decorate,
	decoration = {calligraphic brace}] (10.01,.8) --  (10.99,.3);
\node at (10.55,.75) {$g_2$};
\draw[thickedge] (12.25,0)--(Rroot);
\node[vertexc] (Ra3) at (11.825, 0) {};
\node at (11.725,.15) {$b_2$};	
\node at (12.125,.15) {$a_2$};
\draw[thickedge] (Ll1)--(Rl4);
\node at (9.5,.9) {$u_2$};
\draw[thickedge] (Ll2)--(Rl2);
\node at (9.3,.25) {$y$};
\draw[thickedge] (Ll3)--(Rl3);
\node at (9.3,-.25) {$m$};
\draw[thickedge] (Ll4)--(Rl1);
\node at (9.5,-.9) {$x$};
\end{tikzpicture}
\end{center}
\end{itemize}
We excluded all scar-types disallowed in the Lemma, hence the proof is finished. 
\end{proof}
 
 \begin{lemma}\label{lm:k2dscars}
 Let $\T$ be a tanglegram, such that $\XX_{\T}=\{X\}$, and $\T[X]\simeq\K_2$. For $j\in\{1,2\}$ let
 $M_{j}$ be the set of edges in $\sigma\setminus X$ that create a scar on $d_j$ in $\T[X]$.
 Then at most one of the sets $M_{1}$, $M_{2}$ is non-empty. 
  \end{lemma}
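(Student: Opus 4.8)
The plan is to argue by contradiction. Assume both $M_1$ and $M_2$ are non-empty and pick $m_1\in M_1$, $m_2\in M_2$; I will produce a cross-responsible set $Z\neq X$, contradicting $\XX_\T=\{X\}$. Throughout I use the standardized labeling of $\K_2$ from Figure~\ref{fig:labels}: write $X=\{u_1,u_2,x,y\}$, let $\ell_x,\ell_y\in\LL(L_\T)$ and $\ell_x',\ell_y'\in\LL(R_\T)$ be the endpoints of $x$ and $y$, and set $w:=\ell_x\land_{L_\T}\ell_y$ and $z:=\ell_x'\land_{R_\T}\ell_y'$. Then $w$ is identified with the parent of $\ell_x$ in $L_{\T[X]}$ (joined to $\ell_x$ by the leaf-edge $d_1$), while $\ell_y$ is a proper descendant of $w$ there; symmetrically $z$ is the parent of $\ell_x'$ in $R_{\T[X]}$ (joined by $d_2$), and $\ell_y'$ is a proper descendant of $z$.

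By Lemma~\ref{lm:k2possible}, $m_1$ has scar-type $(d_1,f_2)$ on $\T[X]$ and $m_2$ has scar-type $(f_1,d_2)$; in particular $m_1\neq m_2$ and neither lies in $X$. I now locate the four ``new'' leaves. Because the left-scar of $m_1$ is on $d_1$, its left endpoint $\mu_1$ hangs off the representative path $P_{d_1}$ (which runs from $\ell_x$ to $w$ in $L_\T\llbracket X\rrbracket$, see Definitions~\ref{rep_path} and~\ref{scar}) at a vertex strictly between $\ell_x$ and $w$; hence $\ell_x$ and $\mu_1$ lie in the same component of $L_\T-w$, and $\ell_x\land_{L_\T}\mu_1$ is a proper descendant of $w$. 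Because the left-scar of $m_2$ is on $f_1$, its left endpoint $\mu_2$ hangs off $P_{f_1}$ (which runs from $\ell_y$ to $\ell_y\land_{L_\T}\ell_{u_1}$, itself a proper descendant of $w$); hence $\ell_y$ and $\mu_2$ lie in the component of $L_\T-w$ that does not contain $\ell_x$, and $\ell_y\land_{L_\T}\mu_2$ is a proper descendant of $w$. The mirror-image analysis in $R_\T$ puts the right endpoint $\nu_1$ of $m_1$ in the component of $R_\T-z$ containing $\ell_y'$ (hanging off $P_{f_2}$), and the right endpoint $\nu_2$ of $m_2$ in the other non-root component, the one containing $\ell_x'$ (hanging off $P_{d_2}$), with $\ell_x'\land_{R_\T}\nu_2$ and $\ell_y'\land_{R_\T}\nu_1$ proper descendants of $z$. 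In particular $\ell_x,\ell_y,\mu_1,\mu_2$ are four distinct leaves, as are $\ell_x',\ell_y',\nu_1,\nu_2$, so $Z:=\{x,y,m_1,m_2\}$ has size $4$, and $Z\neq X$ since $m_1\notin X$.

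It remains to show $\T[Z]\simeq\K_1$. From the location of the endpoints, the pairwise meet of $\{\ell_x,\ell_y,\mu_1,\mu_2\}$ equals $w$ exactly for the four pairs $\ell_x\land\ell_y$, $\ell_x\land\mu_2$, $\ell_y\land\mu_1$, $\mu_1\land\mu_2$, while $\ell_x\land\mu_1$ and $\ell_y\land\mu_2$ are proper descendants of $w$. Hence $L_{\T[Z]}=L_\T[\{\ell_x,\ell_y,\mu_1,\mu_2\}]$ is the rooted binary tree on four leaves whose two cherries are $\{\ell_x,\mu_1\}$ and $\{\ell_y,\mu_2\}$, i.e.\ a copy of the left tree of $\K_1$. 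Symmetrically $R_{\T[Z]}$ is the rooted binary tree whose cherries are $\{\ell_x',\nu_2\}$ and $\{\ell_y',\nu_1\}$. Finally $Z$ matches the left cherry $\{\ell_x,\mu_1\}$ to both right cherries ($\ell_x$ to $\ell_x'$ via $x$, $\mu_1$ to $\nu_1$ via $m_1$) and the left cherry $\{\ell_y,\mu_2\}$ to both right cherries ($\ell_y$ to $\ell_y'$ via $y$, $\mu_2$ to $\nu_2$ via $m_2$); that is, $Z$ realizes each left-cherry--right-cherry pair exactly once. This is precisely the cherry-incidence pattern of $\K_1$ (see Figure~\ref{fig:1cc}), so $\T[Z]\simeq\K_1$ and $Z\in\XX_\T\setminus\{X\}$, a contradiction.

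The only delicate step is the second paragraph: reading off, from the scar-types supplied by Lemma~\ref{lm:k2possible} together with the definition of a scar, exactly which component of $L_\T-w$ (resp.\ $R_\T-z$) each of $\mu_1,\mu_2,\nu_1,\nu_2$ falls into, so that both induced trees come out with the two-cherry shape rather than the caterpillar shape. Once that bookkeeping is done, the isomorphism $\T[Z]\simeq\K_1$ is immediate, and a picture in the style of those accompanying Lemma~\ref{lm:k2possible} makes it transparent.
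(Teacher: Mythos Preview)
Your proof is correct and follows essentially the same approach as the paper: assume $m_1\in M_1$, $m_2\in M_2$, invoke Lemma~\ref{lm:k2possible} to pin down their scar-types as $(d_1,f_2)$ and $(f_1,d_2)$, and conclude that $\{x,y,m_1,m_2\}$ induces a $\K_1$. The paper simply asserts this last step with a picture, whereas you spell out the verification by tracking components of $L_\T-w$ and $R_\T-z$; your added bookkeeping is sound and makes explicit what the paper leaves to the figure.
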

  
 \begin{proof} 
Assume that both $M_{1}$ and $M_{2}$ are non-empty, and
 let $m_1\in M_{1}$ and $m_2\in M_{2}$. By Lemma~\ref{lm:k2possible} the scar-type of $m_1$ is $(d_1,f_2)$ and the scar-type of $m_2$ is 
 $(f_1,d_2)$ in $\T[X]$.
Then $\{m_1,m_2,x,y\}$ induces a $\K_1$, which is a contradiction. 
\begin{center}
\begin{tikzpicture} 
\node[vertex] (lroot) at (-.5,0) {};
\node[vertex] (la1) at (0,-.25) {};
\node[vertex] (la2) at (.5,-.5) {};
\node[vertexb] (ll1) at (1, .75) {};	
\node[vertexb] (ll2) at (1, 0.25) {};		
\node[vertexb] (ll3) at (1, -0.25) {};		
\node[vertexb] (ll4) at (1, -.75) {};
\node[vertexd] (ll5) at (1, .5) {};		
\node[vertexd] (ll6) at (1, -.5) {};		
\draw[thickedge] (ll1)--(lroot)--(la1);
\node at (.1,.45) {$c_1$};
\node at (-.25,.-.25) {$b_1$};
\draw[thickedge] (ll2)--(la1)--(la2);
\node at (.4,.1) {$d_1$};
\node[vertexc] (la3) at (.75,0.125) {};
\node at (.15,-.5) {$e_1$};
\draw[thickedge] (ll3)--(la2)--(ll4);
\node[vertexc] (la4) at (.75,-0.625) {};
\node at (.65,-.75) {$g_1$};
\node at (.65,-.25) {$f_1$};
\draw[thickedge] (-1.25,0)--(lroot);
\node at (-.875,.15) {$a_1$};
\node[vertex] (rroot) at (3.5,0) {};
\node[vertex] (ra1) at (3,.25) {};
\node[vertex] (ra2) at (2.5,.5) {};
\node[vertexb] (rl1) at (2, -.75) {};	
\node[vertexb] (rl2) at (2, -0.25) {};		
\node[vertexb] (rl3) at (2, 0.25) {};		
\node[vertexb] (rl4) at (2, .75) {};
\node[vertexd] (rl5) at (2, .5) {};	
\node[vertexd] (rl6) at (2, -.5) {};			
\draw[thickedge] (rl1)--(rroot)--(ra1);
\node at (2.9,-.45) {$c_2$};
\node at (3.3,.25) {$b_2$};
\draw[thickedge] (rl2)--(ra1)--(ra2);
\node at (2.6,-.1) {$d_2$};
\node at (2.9,.45) {$e_2$};
\draw[thickedge] (rl3)--(ra2)--(rl4);
\node at (2.4,.75) {$g_2$};
\node at (2.4,.25) {$f_2$};
\draw[thickedge] (4.25,0)--(rroot);
\node[vertexc] (ra4) at (2.25,-.125) {};
\node[vertexc] (ra3) at (2.25, .625) {};	
\node at (3.875,.15) {$a_2$};
\draw[thickedge,dotted] (la3)--(ll5)--(rl5)--(ra3);
\draw[thickedge,dotted] (la4)--(ll6)--(rl6)--(ra4);
\node at (1.5,.635) {$m_1$};
\node at (1.6,-.350) {$m_2$};
\draw[thickedge] (ll1)--(rl4);
\node at (1.5,.9) {$u_2$};
\draw[thickedge] (ll2)--(rl2);
\node at (1.3,.25) {$x$};
\draw[thickedge] (ll3)--(rl3);
\node at (1.3,-.25) {$y$};
\draw[thickedge] (ll4)--(rl1);
\node at (1.5,-.9) {$u_1$};
\draw[->,line width=5pt] (5,0)--(6,0);
\node[vertex] (Lroot) at (7.5,0) {};
\node[vertex] (La1) at (8.5,.5) {};
\node[vertex] (La2) at (8.5,-.5) {};
\node[vertexb] (Ll1) at (9, .75) {};	
\node[vertexb] (Ll2) at (9, 0.25) {};		
\node[vertexb] (Ll3) at (9, -0.25) {};		
\node[vertexb] (Ll4) at (9, -.75) {};	
\draw[thickedge] (La1)--(Lroot)--(La2);
\draw [decorate,
	decoration = {calligraphic brace}] (7.51,.1) --  (8.99,.85);
\node at (8.1,.65) {$d_1$};
\draw[thickedge] (Ll1)--(La1)--(Ll2);
\node at (8,-.4) {$e_1$};
\draw[thickedge] (Ll3)--(La2)--(Ll4);
\node at (8.65,-.75) {$f_1$};
\node at (8.65,-.25) {$g_1$};
\draw[thickedge] (6.75,0)--(Lroot);
\node[vertexc] at (7.125,0) {};
\node at (6.95,.15) {$a_1$};
\node at (7.255,.2) {$b_1$};
\node[vertex] (Rroot) at (11.5,0) {};
\node[vertex] (Ra1) at (10.5,-.5) {};
\node[vertex] (Ra2) at (10.5,.5) {};
\node[vertexb] (Rl1) at (10, -.75) {};	
\node[vertexb] (Rl2) at (10, -0.25) {};		
\node[vertexb] (Rl3) at (10, 0.25) {};		
\node[vertexb] (Rl4) at (10, .75) {};
\draw [decorate,
	decoration = {calligraphic brace}]  (9.99,.85)--(11.49,.1);
\draw[thickedge] (Ra2)--(Rroot)--(Ra1);
\node at (10.9,.65) {$d_2$};
\draw[thickedge] (Rl2)--(Ra1)--(Rl1);
\draw[thickedge] (Rl3)--(Ra2)--(Rl4);
\node at (10.4,-.25) {$g_2$};
\node at (10.4,-.75) {$f_2$};
\node at (11,-.4) {$e_2$};
\draw[thickedge] (12.25,0)--(Rroot);
\node[vertexc] at (11.825,0) {};
\node at (12,.15) {$a_2$};
\node at (11.7,.2) {$b_2$};
\draw[thickedge] (Ll1)--(Rl4);
\node at (9.5,.9) {$x$};
\draw[thickedge] (Ll2)--(Rl2);
\node at (9.3,.25) {$m_1$};
\draw[thickedge] (Ll3)--(Rl3);
\node at (9.3,-.25) {$m_2$};
\draw[thickedge] (Ll4)--(Rl1);
\node at (9.5,-.9) {$y$};
\end{tikzpicture}
\end{center}
 \end{proof}

 \begin{theorem}\label{th:k2} Let $\T$ be a tanglegram with $\XX_{\T}=\{X\}$. If $\T[X]\simeq\K_2$, then $\Crt(\T)=1$.
  \end{theorem}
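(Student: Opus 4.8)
The plan is to follow the proof of Theorem~\ref{th:k1}: remove one matching edge of $X$, use Theorem~\ref{th:tkurat} to obtain a planar layout of the resulting tanglegram, and then put the removed edge back so that it crosses exactly one other edge. Since $\XX_{\T}\neq\emptyset$ forces $\Crt(\T)\geq 1$, it suffices to produce a layout of $\T$ with exactly one crossing. First I would normalize using symmetry: the proper graph automorphism of $\K_2$ exchanging the two roots (recalled before Figure~\ref{fig:labels}) swaps the subscripts $1$ and $2$ on all edge-labels, so applying the associated reversal to $\T$ leaves $\XX_{\T}=\{X\}$, the type $\K_2$ of $\T[X]$, and $\Crt(\T)$ unchanged, while interchanging the sets $M_1,M_2$ of Lemma~\ref{lm:k2dscars}. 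Since by that lemma at most one of $M_1,M_2$ is non-empty, after a reversal if needed we may assume $M_2=\emptyset$; then by Lemma~\ref{lm:k2possible} every $m\in\sigma_{\T}\setminus X$ has scar-type in $\{(a_1,a_2),(a_1,b_2),(b_1,a_2),(a_1,c_2),(c_1,a_2),(b_1,c_2),(c_1,b_2),(d_1,f_2)\}$.

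Next I would delete the edge $x$ and set $\T'=\T[\sigma_{\T}\setminus\{x\}]$; its two leaf-edges $e_1,e_2$ carry no scars, so this changes no scar of the remaining edges. Since $\XX_{\T'}=\emptyset$, Theorem~\ref{th:tkurat} yields a planar layout $\D'$ of $\T'$, and up to reflecting $\D'$ the sublayout it induces on $\T[\{u_1,u_2,y\}]$ is the unique planar one in which the left leaves appear in the order $u_2,y,u_1$ and the right leaves in the order $u_1,y,u_2$. Using Lemma~\ref{lm:consistent} together with planarity of $\D'$ I would then locate the leaves of every $m\in\sigma_{\T}\setminus X$: a $c_1$-scar (resp.\ $c_2$-scar) left leaf (resp.\ right leaf) lies in the same contiguous block of $\vec{\ell}_{L,\D'}$ (resp.\ $\vec{\ell}_{R,\D'}$) as the left leaf of $u_2$ (resp.\ the right leaf of $u_1$); an $a_1$-scar (resp.\ $a_2$-scar) leaf lies at an extreme end of $\vec{\ell}_{L,\D'}$ (resp.\ $\vec{\ell}_{R,\D'}$); and the $b_1$- and $d_1$-scar (resp.\ $b_2$- and $f_2$-scar) leaves occupy intermediate positions whose sides are pinned down by planarity against $u_1,u_2,y$. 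This is the $\K_2$-analogue of the ``gray region'' argument of Theorem~\ref{th:k1}: all of $\sigma_{\T}\setminus X$ is confined away from the slot where $x$ will go back.

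Then I would reinsert $x$. In $L_{\T}$ the leaf $\ell_x^L$ is a child of a vertex whose other child subtree $B_L$ consists of the $(d_1,f_2)$-left-leaves together with the left leaves of $u_1$ and $y$; form $\vec{\ell}_L$ from $\vec{\ell}_{L,\D'}$ by inserting $\ell_x^L$ immediately adjacent to the block $B_L$. Symmetrically $\ell_x^R$ is a child of a vertex in $R_{\T}$ with other child subtree $C_R$ (the $(d_1,f_2)$-right-leaves together with the right leaves of $u_2$ and $y$), and $\vec{\ell}_R$ is obtained by inserting $\ell_x^R$ immediately adjacent to $C_R$; one checks via Lemma~\ref{lm:consistent} that $\vec{\ell}_L,\vec{\ell}_R$ are consistent with $L_{\T},R_{\T}$. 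Choosing the two sides of insertion appropriately --- splitting according to whether $M_1=\emptyset$ --- one arranges that $x$ separates no $(d_1,f_2)$-edge. A final computation with (\ref{eq:crosslayout}), using planarity of $\D'$ and the locations of the $m$'s, should then give that $x$ crosses no edge of $\sigma_{\T}\setminus X$ and crosses exactly one of $u_1,u_2,y$, each of which is an unsafe partner of $x$; hence the new layout of $\T$ has a single crossing and $\Crt(\T)=1$.

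The main obstacle I anticipate is the middle step: proving that planarity of $\D'$ truly forces every leaf of $\sigma_{\T}\setminus X$ into the good region, and then checking case by case (on which of the seven ``near-root'' scar-types actually occur, and on whether $M_1$ is empty) that the reinserted $x$ misses all of them while meeting exactly one edge of $X$. The extra scar-type $(d_1,f_2)$ --- with an inner left scar and a leaf-edge right scar, a situation impossible for $\K_1$ by Lemma~\ref{lm:k1notleaf} --- is precisely what makes this bookkeeping heavier than in Theorem~\ref{th:k1}.
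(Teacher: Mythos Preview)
Your strategy coincides with the paper's when $M_1=\emptyset$, but it breaks down precisely in the case $M_1\neq\emptyset$ that you single out as the hard one. The sentence ``In $L_{\T}$ the leaf $\ell_x^L$ is a child of a vertex whose other child subtree $B_L$ consists of the $(d_1,f_2)$-left-leaves together with the left leaves of $u_1$ and $y$'' is false once $d_1$ carries a scar: the parent of $\lambda_{L,x}$ in $L_{\T}$ is then the scar vertex on $P_{d_1}$ \emph{closest to $\lambda_{L,x}$}, and its other child subtree is only the bunch of $(d_1,f_2)$-left-leaves hanging from that particular scar, not all of $B_L$. Consequently ``insert $\lambda_{L,x}$ immediately adjacent to $B_L$'' need not give a consistent order. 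Worse, the consistent insertion spot (adjacent to that last scar's subtree) can sit strictly inside the block of $M_1$-left-leaves in your planar layout $\D'$ of $\T'=\T[\sigma_{\T}\setminus\{x\}]$; a caterpillar $P_{d_1}$ with at least three scars already allows this. With $\lambda_{L,x}$ trapped in the interior of the $M_1$-left-block while $\lambda_{R,x}$ must go to one end of $C_R$, the reinserted $x$ unavoidably crosses some $M_1$-edges in addition to one edge of $X$, so the layout has more than one crossing. No ``choice of side'' repairs this.

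The paper avoids this trap by a genuinely different decomposition when $M\neq\emptyset$: it deletes $M\cup\{x\}$ (not just $x$) to form $\T'$, gets a planar layout $\D'$ of $\T'$, and then uses a \emph{second} planar tanglegram $\T^{\star}=\T[\{x,y,u_1\}\cup M]$ (with $u_2$ removed instead) whose planar layout $\D^{\star}$ dictates the correct interleaving of $x$ with the $M$-edges. The final $1$-crossing layout of $\T$ is obtained by splicing the leaf orders coming from $\D'$ and $\D^{\star}$. This second auxiliary layout is the missing idea in your plan.
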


 \begin{proof}
 Since $\XX_{\T}\ne\emptyset$, $\Crt(\T)\ge 1$. To prove the theorem, we need to find a layout of $\T$ with only one crossing.
 
 Let $M$ be the set of edges in $\sigma\setminus X$ that have a scar on one of $d_1$ or $d_2$ in $\T[X]$. 
 Define $N=\sigma\setminus(X\cup M)$.
 By Lemma~\ref{lm:k2possible}, the possible scar-types
 of edges in $N$ in $\T[X]$ are $(a_1,a_2)$, $(a_1,b_2)$, $(b_1,a_2)$, $(a_1,c_2)$, $(c_1,a_2)$ $(b_1,c_2)$ and $(c_1,b_2)$; and
 by Lemmas~\ref{lm:k2possible} and~\ref{lm:k2dscars} either $M=\emptyset$ or
 there is a unique $j\in\{1,2\}$,  such that all edges in $M$ have scars on $d_j$ and the other scar of every edge in $M$ is on $f_{3-j}$ in $\T[X]$.

 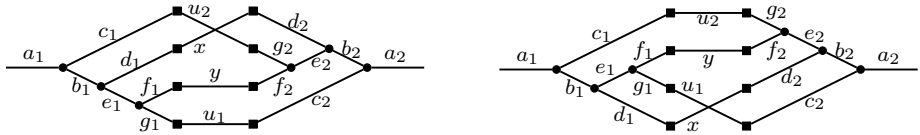
\begin{figure}[http]
\centering 
\begin{tikzpicture}
\node[vertex] (lroot) at (-.5,0) {};
\node[vertex] (la1) at (0,-.25) {};
\node[vertex] (la2) at (.5,-.5) {};
\node[vertexb] (ll1) at (1, .75) {};	
\node[vertexb] (ll2) at (1, 0.25) {};		
\node[vertexb] (ll3) at (1, -0.25) {};		
\node[vertexb] (ll4) at (1, -.75) {};		
\draw[thickedge] (ll1)--(lroot)--(la1);
\node at (.1,.45) {$c_1$};
\node at (-.25,.-.25) {$b_1$};
\draw[thickedge] (ll2)--(la1)--(la2);
\node at (.4,.1) {$d_1$};
\node at (.15,-.5) {$e_1$};
\draw[thickedge] (ll3)--(la2)--(ll4);
\node at (.65,-.75) {$g_1$};
\node at (.65,-.25) {$f_1$};
\draw[thickedge] (-1.25,0)--(lroot);
\node at (-.875,.15) {$a_1$};
\node[vertex] (rroot) at (3.5,0) {};
\node[vertex] (ra1) at (3,.25) {};
\node[vertex] (ra2) at (2.5,0) {};
\node[vertexb] (rl1) at (2, -.75) {};	
\node[vertexb] (rl2) at (2, 0.75) {};		
\node[vertexb] (rl3) at (2, -0.25) {};		
\node[vertexb] (rl4) at (2, .25) {};		
\draw[thickedge] (rl1)--(rroot)--(ra1);
\node at (2.9,-.45) {$c_2$};
\node at (3.3,.25) {$b_2$};
\draw[thickedge] (rl2)--(ra1)--(ra2);
\node at (2.6,.6) {$d_2$};
\node at (2.9,.05) {$e_2$};
\draw[thickedge] (rl3)--(ra2)--(rl4);
\node at (2.4,.25) {$g_2$};
\node at (2.4,-.25) {$f_2$};
\draw[thickedge] (4.25,0)--(rroot);
\node at (3.875,.15) {$a_2$};
\draw[thickedge] (ll1)--(rl4);
\node at (1.3,.75) {$u_2$};
\draw[thickedge] (ll2)--(rl2);
\node at (1.3,.25) {$x$};
\draw[thickedge] (ll3)--(rl3);
\node at (1.5,-.1) {$y$};
\draw[thickedge] (ll4)--(rl1);
\node at (1.5,-.65) {$u_1$};
\end{tikzpicture}
\quad\quad
\begin{tikzpicture}
\node[vertex] (lroot) at (-.5,0) {};
\node[vertex] (la1) at (0,-.25) {};
\node[vertex] (la2) at (.5,0) {};
\node[vertexb] (ll1) at (1, .75) {};	
\node[vertexb] (ll2) at (1, -.75) {};		
\node[vertexb] (ll3) at (1, 0.25) {};		
\node[vertexb] (ll4) at (1, -.25) {};		
\draw[thickedge] (ll1)--(lroot)--(la1);
\node at (.1,.45) {$c_1$};
\node at (-.25,.-.25) {$b_1$};
\draw[thickedge] (ll2)--(la1)--(la2);
\node at (.4,-.6) {$d_1$};
\node at (.15,0) {$e_1$};
\draw[thickedge] (ll3)--(la2)--(ll4);
\node at (.65,-.25) {$g_1$};
\node at (.65,.25) {$f_1$};
\draw[thickedge] (-1.25,0)--(lroot);
\node at (-.875,.15) {$a_1$};
\node[vertex] (rroot) at (3.5,0) {};
\node[vertex] (ra1) at (3,.25) {};
\node[vertex] (ra2) at (2.5,.5) {};
\node[vertexb] (rl1) at (2, -.75) {};	
\node[vertexb] (rl2) at (2, -0.25) {};		
\node[vertexb] (rl3) at (2, 0.25) {};		
\node[vertexb] (rl4) at (2, .75) {};		
\draw[thickedge] (rl1)--(rroot)--(ra1);
\node at (2.9,-.45) {$c_2$};
\node at (3.3,.25) {$b_2$};
\draw[thickedge] (rl2)--(ra1)--(ra2);
\node at (2.6,-.1) {$d_2$};
\node at (2.9,.45) {$e_2$};
\draw[thickedge] (rl3)--(ra2)--(rl4);
\node at (2.4,.75) {$g_2$};
\node at (2.4,.25) {$f_2$};
\draw[thickedge] (4.25,0)--(rroot);
\node at (3.875,.15) {$a_2$};
\draw[thickedge] (ll1)--(rl4);
\node at (1.5,.65) {$u_2$};
\draw[thickedge] (ll2)--(rl2);
\node at (1.5,.1) {$y$};
\draw[thickedge] (ll3)--(rl3);
\node at (1.3,-.25) {$u_1$};
\draw[thickedge] (ll4)--(rl1);
\node at (1.3,-.75) {$x$};
\end{tikzpicture}
\caption{Two more optimal drawings of $\K_2$ with standardized edge-labeling.} 
\label{fig:alternates}
\end{figure} 

For the three cases outlined above, different sublayouts of $\T[X]\simeq\K_2$ will arise as sublayouts. For $M=\emptyset$, the layout on Figure~\ref{fig:labels}, for $j=1$ the layout
on the left side of Figure~\ref{fig:alternates}, and finally for $j=2$ the layout
on the right side of Figure~\ref{fig:alternates} will emerge.

\begin{figure}[http]
\centering 
\begin{tikzpicture}[scale=2]
\fill[gray!50] (3.5,0)--(2,.75)--(1,.75)--(-.5,0)--(1,-.75)--(2,-.75)--(3.5,0);			
\node[vertex] (lroot) at (-.5,0) {};
\node[vertexc] (la1) at (0,-.25) {$p_1$};
\node[vertexc] (la2) at (.5,-.5) {$p_2$};
\node[vertexb] (ll1) at (1, .75) {};	
\node[vertexb] (ll2) at (1, 0.25) {};		
\node[vertexb] (ll3) at (1, -0.25) {};		
\node[vertexb] (ll4) at (1, -.75) {};		
\draw[thickedge] (ll1)--(lroot)--(la1);
\node at (.1,.45) {$c_1$};
\node at (-.25,.-.25) {$b_1$};
\draw[thickedge] (la1)--(la2);
\draw[thickedge, dotted] (ll2)--(la1);
\node at (.4,.1) {$d_1$};
\node at (.15,-.5) {$e_1$};
\draw[thickedge] (ll3)--(la2)--(ll4);
\node at (.65,-.75) {$g_1$};
\node at (.65,-.25) {$f_1$};
\draw[thickedge] (-1.25,0)--(lroot);
\node at (-.875,.15) {$a_1$};
\node[vertex] (rroot) at (3.5,0) {};
\node[vertexc] (ra1) at (3,.25) {$q_1$};
\node[vertexc] (ra2) at (2.5,.5) {$q_2$};
\node[vertexb] (rl1) at (2, -.75) {};	
\node[vertexb] (rl2) at (2, -0.25) {};		
\node[vertexb] (rl3) at (2, 0.25) {};		
\node[vertexb] (rl4) at (2, .75) {};		
\draw[thickedge] (rl1)--(rroot)--(ra1);
\node at (2.9,-.45) {$c_2$};
\node at (3.3,.25) {$b_2$};
\draw[thickedge] (ra1)--(ra2);
\draw[thickedge, dotted] (rl2)--(ra1);
\node at (2.6,-.1) {$d_2$};
\node at (2.9,.45) {$e_2$};
\draw[thickedge] (rl3)--(ra2)--(rl4);
\node at (2.4,.75) {$g_2$};
\node at (2.4,.25) {$f_2$};
\draw[thickedge] (4.25,0)--(rroot);
\node at (3.875,.15) {$a_2$};
\draw[thickedge] (ll1)--(rl4);
\node at (1.5,.9) {$u_2$};
\draw[thickedge, dotted] (ll2)--(rl2);
\node at (1.3,.25) {$x$};
\draw[thickedge] (ll3)--(rl3);
\node at (1.3,-.25) {$y$};
\draw[thickedge] (ll4)--(rl1);
\node at (1.5,-.9) {$u_1$};
\end{tikzpicture}
\caption{Induced subdrawing of $\T[\{u_1,u_2,y\}]$ in  $\D^{\prime}$.} 
\label{fig:k2sub}
\end{figure}

For any edge $\beta\in\sigma$, we will use $\lambda_{L,\beta}$ and $\lambda_{R,\beta}$ to denote the  leaves incident to this edge in the left- and right-tree, respectively.  Without loss of generality we can assume that $X=\{u_1,u_2,x,y\}$ in  the standardized edge-labeling of $\K_2\simeq\T[X]$ given in  Figure~\ref{fig:labels}. We set
$p_1=\lambda_{L,u_1}\land_{L_{\T}}\lambda_{L,x}$, $p_2=\lambda_{L,u_1}\land_{L_{\T}}\lambda_{L,y}$, 
$q_1=\lambda_{R,u_2}\land_{R_{\T}}\lambda_{R,x}$, $q_2=\lambda_{R,u_2}\land_{R_{\T}}\lambda_{R,y}$ (see Figure~\ref{fig:k2sub}). 
By the second part of Lemma~\ref{lm:k2possible}, for $i\in\{1,2\}$ we have  $P_{g_i}=g_i$ and $P_{e_i}=e_i$ for the representative paths as in
Definition~\ref{rep_path}.

Consider the tanglegram  $\T^{\prime}=\T[\sigma_{\T}\setminus(M\cup\{x\})]$ and  
note that $\sigma_{\T^{\prime}}=N\cup\{u_1,u_2,y\}$.
As $\XX_{\T^{\prime}}=\emptyset$, Theorem~\ref{th:tkurat} gives $\Crt(\T^{\prime})=0$. 
Consider a planar layout $\D^{\prime}$ of $\T^{\prime}$.  
$\D^{\prime}$ (or its mirror image to the $x$ axis)
induces a planar sublayout, in which 
$u_1$ lies below $u_2$, and consequently the induced sublayout of $\T[\{u_1,u_2,y\}]$ is as shown on Figure~\ref{fig:k2sub}.
From now on, this layout is $\D^{\prime}$. 
Let the representation of  the layout $\D^{\prime}$ be denoted by
\begin{equation} \label{primeorder}
(\vec{\ell}_{L,\D^{\prime}},\vec{\ell}_{R,D^{\prime}}). 
\end{equation}

As $\D^{\prime}$ is planar, none of the edges in $N$  enter the interior of the shaded region on Figure~\ref{fig:k2sub}. Consequently,
$\lambda_{L,u_1},\lambda_{L,y},\lambda_{L,u_2}$ and $\lambda_{R,u_2},\lambda_{R,y},\lambda_{R,u_1}$ are contiguous 
subsequences of $\vec{\ell}_{L,\D^{\prime}}$ and $\vec{\ell}_{R,D^{\prime}}$, and for any $\alpha,\beta\in\sigma_{\T^{\prime}}$ we have 
$\lambda_{L,\alpha}\leq_{\vec{\ell}_{L,\D^{\prime}}} \lambda_{L,\beta}$ precisely when
$\lambda_{R,\beta}\leq_{\vec{\ell}_{R,\D^{\prime}}} \lambda_{R,\alpha}$. Define the (possibly empty) sequences 
$\vec{k}_1,\vec{k}_2,\vec{k}_3,\vec{k}_4$  by 
\begin{equation} \label{eq:vektor}
\vec{\ell}_{L,\D^{\prime}}=(\vec{k}_1,\lambda_{L,u_1},\lambda_{L,y},\lambda_{L,u_2},\vec{k}_2),\,\,\,
\hbox{\ and \ }
 \vec{\ell}_{R,\D^{\prime}}=(\vec{k}_3,\lambda_{R,u_2},\lambda_{R,y},\lambda_{R,u_1},\vec{k}_4).
\end{equation}
Consider the case $M=\emptyset$, in other words when $\T^{\prime}=\T[\sigma\setminus\{x\}]$. By the last part of Lemma~\ref{lm:k2possible}, we have that for $i\in\{1,2\}$ that $P_{d_i}=d_i$ and
$P_{f_i}=f_{i}$.
We set the leaf-orders
$\vec{\ell}_{L}=(\vec{k}_1,\lambda_{L,u_1},\lambda_{L,y},\lambda_{L,x},\lambda_{L,u_2},\vec{k}_2)$ and 
$\vec{\ell}_{R}=(\vec{k}_3,\lambda_{R,u_2},\lambda_{R,y}\lambda_{R,x},\lambda_{R,u_1},\vec{k}_4)$, i.e., 
we insert $\lambda_{L,x}$ between $\lambda_{L,y}$ and $\lambda_{L,u_2}$ in $\vec{\ell}_{L,\D^{\prime}}$ and we 
insert $\lambda_{R,x}$ between $\lambda_{R,y}$ and $\lambda_{R,u_1}$
in $\vec{\ell}_{R,D^{\prime}}$ (See the dotted edges in Figure~\ref{fig:k2sub}). Using Lemma~\ref{lm:consistent}, it is easy to see that
$\vec{\ell}_L$ and $\vec{\ell}_R$ are consistent leaf-orders of $L_{\T}$ and $R_{\T}$; we will show
the details for $\vec{\ell}_L$ here. The descendant of $p_2$ are $\lambda_{L,u_1}$ and $\lambda_{L,y}$ that make a contiguous
subsequence of $\vec{\ell}_{L}$.
Let $v\in\II(L_{\T})\setminus \{p_2\}$. As $f_1,g_1,d_1,e_1$ are not subdivided in $\T$,
$\lambda_{L,x}$ is a leaf-descendant of $v$ precisely when $\lambda_{L,u_1},\lambda_{L,y}$ are leaf-descendants of $v$.
Since the leaf-descendants of $v$ in $\T^{\prime}$ form a contiguous subsequence of $\vec{\ell}_{L,\D^{\prime}}$, $\vec{\ell}$ is consistent.
Let $\D$ be the layout of $\T$ with representation $(\vec{\ell}_L,\vec{\ell}_R)$.
As in the proof of Theorem~\ref{th:k1}, the formula (\ref{eq:crosslayout}) gives $\Cr(\D)\leq 1$.

\begin{figure}[http]
\centering 
\begin{tikzpicture}[scale=1.5]
\fill[gray!50] (3,0.25)--(2,.75)--(1,.75)--(-.5,0)--(.5,-.5)--(1,-.25)--(2,-.25)--(3,0.25);			
\node[vertex] (lroot) at (-1,0) {};
\node[vertexc] (la1) at (-.5,0) {$p_1$};
\node[vertexc] (la2) at (.5,-.5) {$p_2$};
\node[vertexb] (ll1) at (1, 1) {};	
\node[vertexb] (ll2) at (1, 0.75) {};		
\node[vertexb] (ll3) at (1, -0.25) {};		
\node[vertexb] (ll4) at (1, -.75) {};		
\draw[thickedge,dotted] (ll1)--(lroot);
\draw[thickedge] (lroot) --(la1);
\node at (-.1,.55) {$c_1$};
\node at (-.75,.-.15) {$b_1$};
\draw[thickedge] (la1)--(la2);
\draw[thickedge] (ll2)--(la1);
\node at (.4,.3) {$d_1$};
\node at (0,-.35) {$e_1$};
\draw[thickedge] (ll3)--(la2)--(ll4);
\node at (.65,-.75) {$g_1$};
\node at (.65,-.25) {$f_1$};
\draw[thickedge] (-1.5,0)--(lroot);
\node at (-1.25,.15) {$a_1$};
\node[vertex] (rroot) at (3.5,0) {};
\node[vertexc] (ra1) at (3,.25) {$q_1$};
\node[vertexc] (ra2) at (2.5,0) {$q_2$};
\node[vertexb] (rl1) at (2, -.75) {};	
\node[vertexb] (rl2) at (2, 0.75) {};		
\node[vertexb] (rl3) at (2, -0.25) {};		
\node[vertexb] (rl4) at (2, .25) {};		
\draw[thickedge] (rl1)--(rroot)--(ra1);
\node at (2.9,-.45) {$c_2$};
\node at (3.3,.25) {$b_2$};
\draw[thickedge] (ra1)--(ra2);
\draw[thickedge] (rl2)--(ra1);
\node at (2.45,.65) {$d_2$};
\node at (2.8,0) {$e_2$};
\draw[thickedge] (rl3)--(ra2);
\draw[thickedge,dotted] (ra2)--(rl4);
\node at (2.3,.25) {$g_2$};
\node at (2.3,-.25) {$f_2$};
\draw[thickedge] (4.25,0)--(rroot);
\node at (3.875,.15) {$a_2$};
\draw[thickedge,dotted] (ll1)--(rl4);
\node at (1.6,.85) {$x$};
\draw[thickedge] (ll2)--(rl2);
\node at (1.5,-.15) {$y$};
\draw[thickedge] (ll3)--(rl3);
\node at (1.65,.35) {$u_2$};
\draw[thickedge] (ll4)--(rl1);
\node at (1.5,-.9) {$u_1$};
\end{tikzpicture}
\caption{Schematic drawing of  $\D^{\star}$.} 
\label{fig:k2star}
\end{figure}

Consider now the case 
$M\ne\emptyset$.  By Lemma~\ref{lm:k2dscars}, there are two possibilities for the scar-type of the elements of $M$. We  may assume without loss of generality that the edges of $M$ have scar-type $(d_1,f_2)$, as the other case, when the edges of $M$ have a scar-type $(f_1,d_2)$ 
can be handled similarly.  
By Lemma~\ref{lm:k2dscars}, we have $P_{d_2}=d_2$ and $P_{f_1}=f_1$, and hence $d_2,f_1$ are edges of $\T$. By the second part of
Lemma~\ref{lm:k2possible},   $e_1,e_2,g_1,g_2$ are also edges of $\T$.
Define  $\T^{\star}=T[\{x,y,u_1\}\cup M]$. As $u_2\notin\sigma_{\T^{\star}}$, $\XX_{\T^{\star}}=\emptyset$ and consequently $\Crt(T^{\star})=0$.
Let $D^{\star}$ be a planar layout of $T^{\star}$ where $u_1$ lies below $x$. Then the induced sublayout of  
$\T^{\star}[\{u_1,x,y\}]$ in $D^{\star}$  must be as on Figure~\ref{fig:k2star}. Moreover,  for each $m\in M$ the path going through $m$ between
the scars of $m$ in $\T^{\star}$ must lie in the region shaded gray in Figure~\ref{fig:k2star},  with boundary  formed by $e_1f_1yP_{f_2}e_2d_2xP_{d_1}$ in $D^{\star}$. Therefore for the representation $(\vec{\ell}_{L,\D^{\star}},\vec{\ell}_{R,\D^{\star}})$ of $D^{\star}$ we have that
\begin{equation}
\label{eq:vektor2}
\vec{\ell}_{L,D^{\star}}=(\lambda_{L,u_1},\lambda_{L,y},\vec{k}_5,\lambda_{L,x})\,\,\,\hbox{ and }
\,\,\, \vec{\ell}_{R,D^{\star}}=(\lambda_{R,x},\vec{k}_6,\lambda_{R,y},\lambda_{R,u_1}),
\end{equation} 
where $((\vec{k}_5,\lambda_{L,x}),(\lambda_{R,x},\vec{k}_6))$ is the sublayout of
$\overline{\T}=\T^{\star}[M\cup\{x\}]$ induced by $\D^{\star}$. 

Using the $\vec{k}_i$ defined in (\ref{eq:vektor}) and (\ref{eq:vektor2}), set  the leaf-order
$\vec{\ell}_{L}=(\vec{k}_1,\lambda_{L,u_1},\lambda_{L,y},\vec{k}_5,\lambda_{L,x},\lambda_{L,u_2}\vec{k}_2)$. In other words we insert the sequence $(\vec{k}_5,\lambda_{L,x})$ between $\lambda_{L,y}$ and $\lambda_{L,u_2}$ in
$\vec{\ell}_{L,\D^{\prime}}$.
Clearly,
$\vec{\ell}_{L}$ is a total order of $\LL(L_{\T})$.

Using  Lemma~\ref{lm:consistent}, it suffices to show that the  leaf descendants of any $v\in\II(L_{\T})$ make a contiguous subsequence of $\vec{\ell}_L$. Note that $\LL(L_{\T})$ is the disjoint union of $\LL(L_{\T^{\prime}})$ and $\LL(L_{\overline{\T}})$.
We have the following cases to consider:
\begin{itemize}
\item $v\preceq_{L_{\T}} p_1$:
The leaf-descendants of $v$ in $L_{\T^{\prime}}$ form a contiguous subsequence in $\vec{\ell}_{L,\D^{\prime}}$ and
include   $\lambda_{L,u_1}$ and $\lambda_{L,y}$.  
The leaf descendants in $L_{\overline{\T}}$ are the entire leaf set which is listed in the subsequence $(\vec{\ell}_{\overline{D}},\lambda_{L,x})$. Therefore the leaf-descendants of
$v$ form a contiguous subsequence of $\vec{\ell}_L$.
\item $v=p_2$: As $g_1$ and $f_1$ are edges of $\T$, the leaf-descendants of $p_2$ are $\lambda_{L,u_1}$ and $\lambda_{L,y}$, which form a contiguous subsequence in $\vec{\ell}_L$.
\item $p_1\preceq_{L_{\T}} v$ and $v\notin\{p_1,p_2\}$: In this case $v\in\II(L_{\overline{\T}})$, and consequently the leaf-descendants of $v$ form a contiguous subsequence of 
$(\vec{k}_5,\lambda_{L,x})$.
\item $v$ is incomparable to $p_1$ in $\preceq_{L_{\T}}$:  In this case 
$v\notin\II(L_{\overline{\T}})$, 
the leaf-descendants of $v$ are all in $L_{\T^{\prime}}$ and do not include $\lambda_{L,u_1},\lambda_{L,y}$,
therefore they are a contiguous subsequence of either $\vec{k}_1$ or $\vec{k_2}$. 
\end{itemize}
By Lemma~\ref{lm:consistent}, $\vec{\ell}_L$ is a consistent order of
the leaves of $L_{\T}$.

 Using the $\vec{k}_i$ in (\ref{eq:vektor}) and (\ref{eq:vektor2}) again, set
$\vec{\ell}_{R}=(\vec{k}_3,\lambda_{R,x},\lambda_{R,u_2},\vec{k}_6,\lambda_{R,y},\lambda_{R,u_1},\vec{k}_4)$, in other words, insert $\lambda_{R,u_2}$ between
$\lambda_{R,x}$ and $\vec{k}_6$ in $\vec{\ell}_{R,\D^{\prime}}$.
Clearly, $\vec{\ell}_{R}$ is a total order of $\LL(R_{\T})$.

Using  Lemma~\ref{lm:consistent}, it suffices to show that the  leaf descendants of any $v\in\II(R_{\T})$ make a contiguous subsequence 
of $\vec{\ell}_R$. Note that $\LL(R_{\T})$ is the disjoint union of $\LL(R_{\T^{\prime}})$ and $\LL(R_{\overline{\T}})$.
\begin{itemize}
\item $v\preceq_{R_{\T}} q_1$: The leaf-descendants of $v$ in $R_{\T^{\prime}}$ form a contiguous subsequence of $\vec{\ell}_{R,\D^{\prime}}$ and 
include $\lambda_{R,u_2},\lambda_{R,y}$. The set of leaf-descendants of $v$ in $R_{\overline{T}}$ is $\LL(R_{\overline{T}})$.
Therefore the leaf-descendants of $v$ in $\T$ form a contiguous subsequence of $\vec{\ell}_{R}$.
\item $v=q_2$: The set of leaf-descendants of $v$ in $R_{\T}$  is $\{\lambda_{R,u_2},\lambda_{R,y}\}\cup(\LL(T_{\overline{T}})\setminus\{\lambda_{R,x})\}$.
\item $q_2 \preceq_{R_{\T}} v$ and $v\ne q_2$: 
The leaf-descendants of $v$ in $R_{\T}$ are all in $R_{\overline{T}}$ and do not include $\lambda_{R,x}$, therefore they form a contiguous subsequence of
$\vec{k}_6$.
\item $v$ is incomparable to $q_1$ in $\preceq_{R_{\T}}$:  In this case 
$v\notin\II(R_{\overline{\T}})$, 
the leaf-descendants of $v$ are all in $R_{\T^{\prime}}$ and do not include $\lambda_{R,u_2},\lambda_{R,y}$,
therefore they are a contiguous subsequence of either $\vec{k}_3$ or $\vec{k_4}$. 
\end{itemize}

Therefore $(\vec{\ell}_L,\vec{\ell}_R)$ is a representation of a layout $\D$ of $\T$. Formula (\ref{eq:crosslayout}) and the properties of the layout 
$\D^{\prime}$ and $\D^{\star}$ give that $\Cr(\D)\leq 1$.
\end{proof}

\section*{Acknowledgments}
 Part of this work was done while EC and LS were in residence at the Institute for Computational and Experimental Research in Mathematics (ICERM) in Providence (RI, USA) during the Theory, Methods, and Applications of Quantitative Phylogenomics program (supported by grant DMS-1929284 of the National Science Foundation (NSF)).


\end{document}